\documentclass[a4paper,11pt,french,english]{amsart}
\usepackage[english]{babel}
\usepackage[utf8]{inputenc}
\usepackage{lmodern}
\usepackage[T1]{fontenc}
\usepackage{amsmath,amsthm,amssymb,amsfonts}
\usepackage{amscd}
\usepackage{pstricks}
\usepackage{pst-node}
\usepackage[a4paper,left=3.4cm,right=3.4cm,top=4cm,bottom=4cm]{geometry}
\usepackage{enumerate}
\usepackage{enumitem}
\usepackage[linktocpage=true]{hyperref}
\usepackage{url}
\usepackage{csquotes}
\usepackage{tikz-cd}

\title[]{Free abelian quotients of commensurators}
\author{Adrien Le Boudec}
\date{September 28, 2026.}
\address{CNRS, UMPA - ENS Lyon, 46 all\'ee d'Italie, 69364 Lyon, France}
\email{adrien.le-boudec@ens-lyon.fr}

\newcommand{\Z}{\mathbb{Z}}
\newcommand{\Q}{\mathbb{Q}}
\newcommand{\R}{\mathbb{R}}
\newcommand{\A}{\mathbb{A}}

\newcommand{\Aut}{\mathrm{Aut}}
\newcommand{\Comm}{\mathrm{Comm}}
\newcommand{\AComm}{\mathrm{AComm}}

\newcommand{\GL}{\mathrm{GL}}
\newcommand{\SL}{\mathrm{SL}}
\newcommand{\PGL}{\mathrm{PGL}}
\newcommand{\PSL}{\mathrm{PSL}}

\newcommand{\simple}{\mathcal{FS}}
\newcommand{\simpleab}{\mathcal{FS}_{ab}}

\theoremstyle{plain}
\newtheorem{Theorem}{Theorem}[section]
\newtheorem{Proposition}[Theorem]{Proposition}
\newtheorem{Corollary}[Theorem]{Corollary}
\newtheorem{Lemma}[Theorem]{Lemma}
\newtheorem{Step}{Step}

\newtheorem{Theorem-intro}{Theorem}
\newtheorem{Corollary-intro}[Theorem-intro]{Corollary}

\theoremstyle{definition}
\newtheorem{Definition}[Theorem]{Definition}
\newtheorem{Notation}[Theorem]{Notation}
\newtheorem{Question}[Theorem]{Question}

\newtheorem{Remark}[Theorem]{Remark}

\begin{document}

\maketitle


\begin{abstract}
For every group $\Gamma$, we define a homomorphism $d^\Gamma : \Comm(\Gamma) \to \Z^{(\simple)}$ from the abstract commensurator $\Comm(\Gamma)$ to the free abelian group $\Z^{(\simple)}$ with basis the collection $\simple$ of isomorphism classes of finite simple groups. We investigate the homomorphism $d^\Gamma$ when $\Gamma$ is a finitely generated free group $F$. We explicitly describe the image of $d^F : \Comm(F) \to \Z^{(\simple)}$, which is a free abelian group of infinite rank, and we show that the kernel is the monolith of the group $\Comm(F)$. We deduce in particular that every proper quotient of $\Comm(F)$ is abelian. 

We use this to study the commensurator of a cocompact lattice in the automorphism group of a regular tree, which can be seen as a subgroup of $\Comm(F)$. We show that the image of this group under $d^F$ is again a free abelian group of infinite rank, showing in particular this group is not virtually simple. 
\end{abstract}


\section{Introduction}

\subsection{The commensurator of a cocompact tree lattice}

Let $T_d$ be a regular tree of degree $d \geq 3$. The group $\Aut(T_d)$ of automorphisms of $T_d$ is a locally compact and totally disconnected group. By a theorem of Tits, the index two subgroup of $\Aut(T_d)$ consisting of type preserving automorphisms of $T_d$ is a simple group \cite{Tits_arbre}. Recall that for a locally compact group $G$ and a discrete subgroup $\Gamma$, the commensurator of $\Gamma$ in $G$ is the subgroup $\Comm_G(\Gamma)$ consisting of elements $g \in G$ such that $\Gamma$ and $g \Gamma g^{-1}$ are commensurable; where two subgroups are called commensurable if their intersection has finite index in each one of them. The study of lattices and their commensurators in automorphism group of trees emerged at the beginning of the 90's, partly motivated by  developments about  lattices and their commensurators in connected semi-simple Lie groups. Two classical results due to Margulis in the latter situation are the arithmeticity criterion and the commensurator super-rigidity theorem. The  arithmeticity criterion asserts that if $\Gamma$ is an irreducible lattice in a connected semi-simple Lie group $G$ with trivial center and no compact factor, then $\Gamma$ is arithmetic if and only if $\Comm_G(\Gamma)$ is a dense subgroup of $G$ \cite[Chap.\ IX (1.9)]{Margulis-book}. The commensurator super-rigidity theorem asserts that every Zariski dense linear representation of $\Comm_G(\Gamma)$ in a simple algebraic group over a local field is either bounded in restriction to $\Gamma$, or extends to a continuous representation of $G$  \cite[Chap.\ VII Th.\ 5.4]{Margulis-book}.

For $G = \Aut(T_d)$, the automorphism group of a regular tree, a theorem of Leighton and Bass--Kulkarni asserts that if $\Lambda, \Gamma$ are cocompact lattices in $G$, then there is $g \in G$ such that $g \Lambda g^{-1}$ and $\Gamma$ are commensurable \cite{Leighton-finite-cover, Bass-Kulk-90}.  In particular $\Comm_G(\Lambda)$ and $\Comm_G(\Gamma)$ are conjugate in $G$. Hence there is, up to isomorphism, only one commensurator of a cocompact lattice in $G$.  Bass--Kulkarni  showed that $\Comm_G(\Gamma)$  is dense in $G$ \cite{Bass-Kulk-90}. Lubotzky--Mozes--Zimmer showed a commensurator super-rigidity theorem, namely that any minimal action of $\Comm_G(\Gamma)$ on another tree $T$ is either bounded in restriction to $\Gamma$, or extends to a continuous action of $G$ \cite{LMZ-superrigidity}. Lubotzky raised the problem whether $\Gamma$ has the congruence subgroup property in $\Comm_G(\Gamma)$, and this was solved in the affirmative by Mozes  \cite{Mozes-CSP-tree}.

Pursuing the similarity with the setting of arithmetic groups and by analogy with the Margulis-Platonov conjecture, Lubotzky--Mozes--Zimmer raised the problem whether the type preserving index two subgroup $\Comm_G(\Gamma)^+$ of $\Comm_G(\Gamma)$  is a simple group  \cite{LMZ-superrigidity}. Recall that a group $C$ is called \textbf{monolithic} if $\mathrm{Mon}(C) := \bigcap_{N \neq 1} N$, the intersection of all  non-trivial normal subgroups of $C$, is non-trivial. When this holds, $\mathrm{Mon}(C)$ is called the \textbf{monolith} of $C$, and it is the unique smallest non-trivial normal subgroup of $C$. Lubotzky--Mozes--Zimmer and Caprace showed the group $\Comm_G(\Gamma)$ is monolithic, and that the monolith of $\Comm_G(\Gamma)$ contains a finite index subgroup of $\Gamma$ \cite[Proposition 5.1]{LMZ-superrigidity}, \cite[Theorem A.1]{Cap-appendix-comm-tree}. Moreover Caprace showed that the monolith of $\Comm_G(\Gamma)$ is a simple group \cite[Theorem A.1]{Cap-appendix-comm-tree}. In that language the problem whether $\Comm_G(\Gamma)^+$ is a simple group is equivalent to asking whether $\Comm_G(\Gamma)^+$ is the monolith of $\Comm_G(\Gamma)$. We answer this problem in the negative. More precisely, we prove the following theorem:

\begin{Theorem-intro} \label{thm-intro-comme-tree-abelian-quotient}
	Let $\Gamma$ be a cocompact lattice in $G = \Aut(T_d)$. Then the commensurator $\Comm_G(\Gamma)$ of $\Gamma$ in $G$ admits a proper quotient that is free abelian of infinite rank. In particular $\Comm_G(\Gamma)$  is not virtually simple.
\end{Theorem-intro}

For $\Gamma$ and $C = \Comm_G(\Gamma)$ as in the theorem, there is a natural locally compact group associated to the pair $(\Gamma,C)$, namely the  completion $ C / \! \! / \Gamma$  of $C$ with respect to $\Gamma$. The group $C / \! \! / \Gamma$ is implicit in \cite{Mozes-CSP-tree}. It appears explicitly in the work of Creutz--Shalom \cite{Creutz-Shalom-NST}, as well as in the aforementioned work of Caprace \cite{Cap-appendix-comm-tree}. More recently, the group $ C / \! \! / \Gamma$ (and more generally the completion of a subgroup containing and commensurating $\Gamma$) played an important role in the work of the author and C.\ Reid \cite{LB-Reid-latt-determined}. Theorem \ref{thm-intro-comme-tree-abelian-quotient} above can be equivalently stated saying that the group $ C / \! \! / \Gamma$  admits an open normal subgroup such that the associated quotient is a free abelian group of infinite rank: see Corollary  \ref{cor-completion-abelian-quotient}.

\subsection{The abstract commensurator of a free group}

In the course of the proof of Theorem  \ref{thm-intro-comme-tree-abelian-quotient}, the group $\Comm_G(\Gamma)$  is profitably seen as being a subgroup of the abstract commensurator of $\Gamma$. We recall the definitions. If $\Gamma$ is a group, an isomorphism $\varphi:A\to B$ between finite index subgroups $A,B$ of $\Gamma$ is called a {virtual isomorphism of $\Gamma$}. The abstract commensurator of $\Gamma$ consists of virtual isomorphisms of $\Gamma$, modulo the equivalence relation that identifies two of them if they agree on a common finite index subgroup of their domains (see \S  \ref{subsec-prelim-comm}). It forms a group, denoted $\Comm(\Gamma)$, whose isomorphism class is insensitive to passing from $\Gamma$ to a finite index subgroup.

If $\Gamma$ is a subgroup of a group $G$, there is a natural homomorphism $ \Comm_G(\Gamma) \to \Comm(\Gamma)$. In certain situations, rigidity results assert that this homomorphism is surjective, or at least that the image is a finite index subgroup of $\Comm(\Gamma)$. An iconic illustration of such a result is Mostow rigidity: whenever  $\Gamma$ is an irreducible lattice in a connected semi-simple Lie group $G$ with finite center and no compact factor, and $G$ is not locally isomorphic to $\mathrm{SL}(2,\R)$, then $ \Comm_G(\Gamma) \to \Comm(\Gamma)$ has finite index image \cite{Margulis-book}. By contrast, for $\Gamma$ and $G$ as in Theorem  \ref{thm-intro-comme-tree-abelian-quotient}, the group $\Gamma$ is virtually a free group, and the abstract commensurator of $\Gamma$ (which is isomorphic to the abstract commensurator of a free group) stands away from any kind of rigidity result of this form. In the sequel we denote by $F$ a finitely generated free group of rank at least two. Again, since two finitely generated free groups of different ranks always admit isomorphic finite index subgroups, the group $\Comm(F)$ does not depend on the rank of $F$.

 \begin{Definition} \label{def-intro-AComm}
If $A$ is a finite index subgroup of a group $\Gamma$, then any automorphism of $A$ defines a virtual isomorphism of $\Gamma$. The associated map $a_A: \Aut(A) \to \Comm(\Gamma)$ is a homomorphism.  	We denote by $\AComm(\Gamma)$ the subgroup of $\Comm(\Gamma)$ generated by the subgroups $a_A(\Aut(A))$, when $A$ ranges over finite index subgroups of $\Gamma$. 
 \end{Definition}

When $\Gamma$ is a finitely generated group, $\AComm(\Gamma)$ is a normal subgroup of  $\Comm(\Gamma)$ \cite[Proposition 1.4.2]{Odden-thesis},  \cite[Lemma 2.22]{BELBRVW-comm-free}. In the recent work of Barnea, Ershov, Reid, Vannacci, Weigel and the author  \cite{BELBRVW-comm-free}, a sufficient condition on $\Gamma$ was established ensuring that the quotient of $\Comm(\Gamma)$ by $\AComm(\Gamma)$ is infinite (see \cite[Corollary 5.6]{BELBRVW-comm-free}). That condition was applied there to the free group case, showing that the quotient of $\Comm(F)$ by $\AComm(F)$ is an infinite group. The following result identifies the structure of the quotient, answering a problem raised in \cite[Question 5]{BELBRVW-comm-free}: 

\begin{Theorem-intro} \label{thm-intro-comm-free}
Let $F$ be a finitely generated free group of rank at least two. Then the quotient of $\Comm(F)$ by $\AComm(F)$ is a free abelian group of infinite rank. 
\end{Theorem-intro}

Beyond naturalness of the definition, the subgroup  $\AComm(F)$ is of importance in $\Comm(F)$ because $\AComm(F)$ is equal to the monolith of $\Comm(F)$, and $\AComm(F)$ is a simple group  \cite[Theorem 5.4]{BELBRVW-comm-free}.  The combination of Theorem  \ref{thm-intro-comm-free} and \cite[Theorem 5.4]{BELBRVW-comm-free} yields:

\begin{Corollary-intro} \label{cor-intro-comm-free}
	Every proper quotient of $\Comm(F)$ is abelian.
\end{Corollary-intro}

\subsection{Approach and outline}

The proofs of Theorems  \ref{thm-intro-comme-tree-abelian-quotient} and \ref{thm-intro-comm-free} share a common context, that is described in Section \ref{sec-comm-abelian-quotient} below. There we define, for every group $\Gamma$, a canonical homomorphism from $\Comm(\Gamma)$ to a free abelian group. This construction uses an idea that originates from \cite{BELBRVW-comm-free} which consists in considering composition factors associated to domain and range in the context of virtual isomorphisms of $\Gamma$. This was used in \cite{BELBRVW-comm-free} to define a certain subgroup $\Comm_{SN}(\Gamma)$ of $\Comm(\Gamma)$, which was behind the aforementioned sufficient condition ensuring that the quotient of $\Comm(\Gamma)$ by $\AComm(\Gamma)$ is infinite (see the proof of Theorem 5.5 in \cite{BELBRVW-comm-free}). In Section \ref{sec-comm-abelian-quotient} we elaborate on this idea and show that it actually allows to define a homomorphism  $d^\Gamma : \Comm(\Gamma) \to \Z^{(\simple)}$, where $\Z^{(\simple)}$ denotes the free abelian group with basis the collection $\simple$ of isomorphism classes of finite simple groups. See Theorem \ref{thm-definition-morphism}. The kernel of this homomorphism is the subgroup $\Comm_{SN}(\Gamma)$ from \cite{BELBRVW-comm-free} (see Remark  \ref{rmk-CommSN}).

 In Section \ref{sec-comm-free} we study further this homomorphism in the case where $\Gamma$ is a finitely generated free group $F$. The main result of that section is Theorem \ref{thm-ker-CommF}, which asserts that the kernel of $d^F : \Comm(F) \to \Z^{(\simple)}$ is the subgroup $\AComm(F)$ of $\Comm(F)$. Together with the description of the image of $d^F : \Comm(F) \to \Z^{(\simple)}$ provided by Proposition \ref{prop-image-d^F}, this completes the proof of Theorem \ref{thm-intro-comm-free} from the introduction. In the proof of  Theorem \ref{thm-ker-CommF} we are naturally led to consider the following problem: given a finite simple group $S$, is the action of $\Aut(F)$ on the collection of normal subgroups $N$ of $F$ with $F/N \simeq S$  transitive ?  A conjecture due to Wiegold predicts that this is true for every $S$ and every free group $F$ of rank at least three. It is known for certain groups $S$, but it is open in general. We refer to the survey \cite{Lubotzky-survey-AutF-representations} for a detailed exposition on this subject and surrounding topics. In the proof of Theorem \ref{thm-ker-CommF} we take advantage of the fact that the setting where we need this transitivity property to hold  allows for passing to finite index subgroups of arbitrarily large index (which are free groups of arbitrarily large rank). This opens the possibility to invoke a substitute result, namely that the above transitivity property holds for every $S$ provided the rank of $F$ is large enough (Proposition  \ref{prop-transitive-AutF-large-rank}). Section \ref{sec-comm-free} terminates with a comparison between the abstract commensurators of a free group and a surface group. 
 
  Section \ref{sec-comm-tree} deals with commensurators of cocompact tree lattices. If $\Gamma$ is a cocompact lattice in $G = \Aut(T_d)$, then the natural homomorphism $\Comm_G(\Gamma) \to \Comm(\Gamma)$ is injective  \cite[B.7]{Bass-Kulk-90}. The group $\Gamma$ being virtually a free group,  $\Comm(\Gamma)$ is isomorphic to $\Comm(F)$.  The proof of Theorem  \ref{thm-intro-comme-tree-abelian-quotient} consists in considering the homomorphism $d^\Gamma : \Comm(\Gamma) \to \Z^{(\simple)}$, and showing that the image of the restriction of $d^\Gamma$ to $\Comm_G(\Gamma)$ is a subgroup of $\Z^{(\simple)}$ that is of infinite rank. This is carried out \S \ref{subsec-proof-comm-tree}.

\bigskip

\noindent \textbf{Acknowledgments}. We are grateful to Nir Lazarovich for a comment suggesting that the considerations in Theorem 5.5 in \cite{BELBRVW-comm-free} shall probably lead to non-simplicity of the commensurator of a cocompact tree lattice. Retrospectively, this comment served as a decisive impulse in the development of this work, including regarding Sections \ref{sec-comm-abelian-quotient} and \ref{sec-comm-free} which do not deal with commensurators of tree lattices. We also thank Alex Lubotzky for historical comments, and Francesco Fournier-Facio for pointing out \cite{Funar-Lochak} to our attention.

\bigskip

\noindent No AI tool was used at any stage of this work.

\section{Abstract commensurators} \label{sec-comm-abelian-quotient}

\subsection{Preliminaries} \label{subsec-prelim-comm}

Let $\Gamma$ be a group. An isomorphism $\varphi:A\to B$ between finite index subgroups $A,B$ of $\Gamma$ is called a \textbf{virtual isomorphism of $\Gamma$}.  The composition of two virtual isomorphism $\varphi:A\to B$ and $\psi:C\to D$ is defined as $ \varphi^{-1}(B \cap C) \to  \psi(B \cap C)$, $x \mapsto \psi (\varphi(x))$. Two virtual isomorphisms $\varphi:A\to B$ and $\varphi':A'\to B'$ are equivalent if there is a finite index subgroup $A'' \leq A \cap A'$ such that $\varphi$ and $\varphi'$ coincide on $A''$. We will write $[\varphi]$ for the equivalence class. Modulo this equivalence relation, virtual isomorphisms of $\Gamma$ form a group, denoted $\Comm(\Gamma)$, and called the abstract commensurator of $\Gamma$. In order to keep the exposition simple we choose not to consider any topology on $\Gamma$. But in case $\Gamma$ is a topological group (e.g.\ a profinite group), one defines $\Comm(\Gamma)$ similarly, except that finite index subgroups are replaced by finite index open subgroups, and virtual isomorphisms are required to be continuous. All the results of this section carry over to this setting.

If $A$ is a finite index subgroup of $\Gamma$, then any automorphism of $A$ defines a virtual isomorphism of $\Gamma$. The associated map $a_A: \Aut(A) \to \Comm(\Gamma)$ is a homomorphism. 

\begin{Definition} \label{def-AComm}
We denote by $\AComm(\Gamma)$ the subgroup of $\Comm(\Gamma)$ generated by the subgroups $a_A(\Aut(A))$, when $A$ ranges over finite index subgroups of $\Gamma$. 
\end{Definition}

\subsection{The homomorphism  $d^\Gamma : \Comm(\Gamma) \to \Z^{(\simple)}$}

In this subsection we elaborate on an idea from \cite{BELBRVW-comm-free}, which consists of considering composition factors associated to finite index subnormal subgroups in the setting of virtual isomorphisms. 

We denote by $\simple$ the collection of isomorphism classes of finite simple groups. Let $\Gamma$ be a group. If $A$ is a finite index subnormal subgroup of $\Gamma$, we call a finite series $\A = (A_\ell, \ldots, , A_0)$ a \textbf{composition series going from $A$ to $\Gamma$} if $A_\ell = A$, $A_0 = \Gamma$, each $A_{i+1}$ is a normal subgroup of $A_{i}$ and $A_{i} / A_{i+1} \in \simple$. The quotients $A_{i} / A_{i+1}$ are the \textbf{composition factors} of $\A$, and the integer $\ell$ is\textbf{ the composition length of $\A$}. For $S \in \simple$, we denote by $n_S^\Gamma(A)$ the number of composition factors of a composition series going from $A$ to $\Gamma$ that are isomorphic to $S$. By the Schreier refinement theorem, the composition length and the family of integers $\left\lbrace n_S^\Gamma(A)\right\rbrace_{S \in \simple} $ do not depend on the choice of a composition series going from $A$ to $\Gamma$. This common length is denoted $\ell^\Gamma(A)$ and is called \textbf{the composition length of $A$ in $\Gamma$}.

\begin{Lemma} \label{lem-formula-index}
If $A$ is a finite index subnormal subgroup of $\Gamma$, then  \[ (\Gamma : A) = \prod_{S \in \simple}  |S|^{n_S^\Gamma(A)}. \] 
\end{Lemma}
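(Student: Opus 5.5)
The plan is to compute the index along a composition series and then collect the factors by isomorphism type. Since $A$ is a finite index subnormal subgroup of $\Gamma$, a composition series $\A = (A_\ell, \ldots, A_0)$ going from $A$ to $\Gamma$ exists. To see this, take a subnormal chain $A = B_k \trianglelefteq B_{k-1} \trianglelefteq \cdots \trianglelefteq B_0 = \Gamma$. Each $B_{j+1}$ has finite index in $B_j$, because $(\Gamma:A)$ is finite. Then refine each step using a composition series of the finite group $B_j/B_{j+1}$, pulling it back to the subgroups between $B_{j+1}$ and $B_j$ via the correspondence theorem. This is implicit in the definition of $n_S^\Gamma(A)$, so we may fix such a series $\A$.

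Next, apply multiplicativity of the index along the chain $A_\ell \leq A_{\ell-1} \leq \cdots \leq A_0$:
\[ (\Gamma : A) = \prod_{i=0}^{\ell-1} (A_i : A_{i+1}) = \prod_{i=0}^{\ell-1} |A_i/A_{i+1}|. \]
Each quotient $A_i/A_{i+1}$ is isomorphic to some $S \in \simple$, so its order equals $|S|$. Grouping the factors by isomorphism class, the class $S$ occurs exactly $n_S^\Gamma(A)$ times by definition. Only finitely many $n_S^\Gamma(A)$ are nonzero, so the product over $\simple$ is a finite product, and it equals $\prod_{S} |S|^{n_S^\Gamma(A)}$.

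There is no real obstacle here. The only point needing care is existence of a composition series from $A$ to $\Gamma$ when $\Gamma$ is infinite. This is handled by the refinement argument above, since every subquotient appearing in the argument is a finite group. Independence from the chosen series is not needed beyond what the paper already notes: the right-hand side is well defined by the Schreier refinement theorem, and the left-hand side does not depend on any choice.
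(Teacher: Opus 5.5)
Your proof is correct and follows the same route as the paper: you multiply the indices $(A_i : A_{i+1}) = |A_i/A_{i+1}|$ along a composition series going from $A$ to $\Gamma$, then group the factors by isomorphism type. Your refinement argument showing that such a series exists, even when $\Gamma$ is infinite, is a harmless addition that the paper leaves implicit.
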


\begin{proof}
If $(A_\ell, \ldots, , A_0)$ is  a composition series going from $A$ to $\Gamma$ then  $(\Gamma : A) = \prod_{i=0}^{n-1}  (A_{i} : A_{i+1}) $. Each $A_{i} / A_{i+1}$ belongs to $\simple$, and given $S \in \simple$ there are $n_S^\Gamma(A)$ factors isomorphic to $S$. Whence the formula.
\end{proof}

\begin{Lemma} \label{lem-chasles}
Let $A$ be a finite index subnormal subgroup of $\Gamma$, and $B$  a finite index subnormal subgroup of $A$. Then  $n_S^\Gamma(B) = n_S^\Gamma(A) + n_S^A(B)$ for every $S \in \simple$. 
\end{Lemma}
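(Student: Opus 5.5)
The plan is to concatenate composition series and then appeal to the well-definedness of the counts $n_S$, which the paper has already established via the Schreier refinement theorem.

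First I check that the quantities in the statement make sense. Since $B$ is subnormal of finite index in $A$, and $A$ is subnormal of finite index in $\Gamma$, the subgroup $B$ is subnormal of finite index in $\Gamma$. Indeed, chaining a subnormal series from $B$ up to $A$ with one from $A$ up to $\Gamma$ gives a subnormal series from $B$ to $\Gamma$, and $(\Gamma:B)=(\Gamma:A)(A:B)<\infty$. So $n_S^\Gamma(B)$ is defined.

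Next I construct the series. Take a composition series $(A_\ell,\ldots,A_0)$ going from $A$ to $\Gamma$, and a composition series $(B_m,\ldots,B_0)$ going from $B$ to $A$. Both exist: refine any subnormal series with finite-index steps, which terminates because the indices are finite. Since $B_0=A=A_\ell$, concatenating gives
\[ (B_m,\ldots,B_1,A_\ell,\ldots,A_0). \]
Each term is normal in the next, and each successive quotient lies in $\simple$. This is therefore a composition series going from $B$ to $\Gamma$.

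Finally I count. For a fixed $S\in\simple$, the factors of the concatenated series that are isomorphic to $S$ split into those coming from the first series and those coming from the second. There are $n_S^\Gamma(A)$ of the former and $n_S^A(B)$ of the latter. By the independence of the choice of composition series noted before Lemma \ref{lem-formula-index}, this total equals $n_S^\Gamma(B)$, which proves the identity.

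The only point needing any care is the existence of composition series and the subnormality of $B$ in $\Gamma$. Both are routine because all indices involved are finite, so I do not expect a real obstacle.
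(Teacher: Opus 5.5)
Your proposal is correct and follows the paper's own proof: concatenate a composition series from $B$ to $A$ with one from $A$ to $\Gamma$, then count factors isomorphic to $S$ and use that $n_S$ does not depend on the chosen series. Your checks that $B$ is subnormal of finite index in $\Gamma$ and that composition series exist are routine details the paper leaves implicit.
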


\begin{proof}
This follows from the fact that the concatenation of a composition series going from $B$ to $A$ with a composition series going from $A$ to $\Gamma$ yields a composition series going from $B$ to $\Gamma$.
\end{proof}

\begin{Lemma} \label{lem-difference-independent-rep}
Let $A,B,C,D$ be finite index subnormal subgroups of $\Gamma$, and suppose $\varphi:A\to B$ and $\psi:C\to D$ are isomorphisms such that $[\varphi] = [\psi]$ in the group $\Comm(\Gamma)$. Then for every $S \in \simple$, one has  $n_S^\Gamma(B) - n_S^\Gamma(A) = n_S^\Gamma(D) - n_S^\Gamma(C)$. 
\end{Lemma}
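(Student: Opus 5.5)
Since $[\varphi]=[\psi]$, there is a finite index subgroup $E \leq A \cap C$ on which $\varphi$ and $\psi$ coincide. The plan is to shrink $E$ to a subgroup $E'$ that is subnormal of finite index in $A$ and in $C$, and such that $\varphi(E')=\psi(E')$ is subnormal of finite index in $B$ and in $D$. Then I compute both differences through $E'$ using Lemma \ref{lem-chasles} and the fact that isomorphisms preserve composition factors.

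\emph{Choice of $E'$.} I take $E'$ to be the normal core of $E$ in $\Gamma$, namely $E' = \bigcap_{g\in\Gamma} gEg^{-1}$. This is a finite index normal subgroup of $\Gamma$ contained in $E$, and $\varphi,\psi$ agree on it. Being normal in $\Gamma$, $E'$ is normal in $A$ and in $C$, hence subnormal of finite index in both. Set $E'' = \varphi(E') = \psi(E')$. Since $E'$ is normal in $A$ and $\varphi:A\to B$ is an isomorphism, $E''$ is normal in $B$, of index $(A:E')$. For the same reason, using $\psi$, $E''$ is normal in $D$. (Some refinement of this choice might be needed, but normality of $E'$ in $\Gamma$ already gives everything required.)

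\emph{Computation.} Applying Lemma \ref{lem-chasles} twice to $E''$, once through $B$ and once through $D$, gives
\[
n_S^\Gamma(E'') = n_S^\Gamma(B) + n_S^B(E'') = n_S^\Gamma(D) + n_S^D(E'').
\]
Likewise, going through $A$ and through $C$,
\[
n_S^\Gamma(E') = n_S^\Gamma(A) + n_S^A(E') = n_S^\Gamma(C) + n_S^C(E').
\]
Since $\varphi$ is an isomorphism, it carries a composition series going from $E'$ to $A$ onto a composition series going from $E''$ to $B$ with isomorphic factors. Hence $n_S^B(E'') = n_S^A(E')$, and similarly $n_S^D(E'') = n_S^C(E')$ via $\psi$.

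Subtracting the second display from the first, the first equality gives $n_S^\Gamma(E'') - n_S^\Gamma(E') = n_S^\Gamma(B) - n_S^\Gamma(A)$. The second equality gives $n_S^\Gamma(E'') - n_S^\Gamma(E') = n_S^\Gamma(D) - n_S^\Gamma(C)$. Comparing the two yields the claim.

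The only delicate point is making sure every subgroup involved is subnormal in the right ambient group, so that all the $n_S$ are defined and Lemma \ref{lem-chasles} applies. Taking the normal core in $\Gamma$ handles this uniformly. Note that $E''$ need not be normal in $\Gamma$, but it is subnormal there via $B$, which is all that is required.
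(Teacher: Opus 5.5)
Your proposal is correct and follows essentially the same route as the paper. The paper also replaces $E$ by its normal core in $\Gamma$, applies Lemma~\ref{lem-chasles} along $\varphi(E)\leq B\leq\Gamma$ and $E\leq A\leq\Gamma$, cancels $n_S^A(E)=n_S^B(\varphi(E))$, and then concludes by the symmetric argument for $\psi$.
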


\begin{proof}
The assumption  $[\varphi] = [\psi]$ means that there is a finite index subgroup $E$ of $A \cap C$ such that $\varphi, \psi$ coincide on $E$. Upon passing to the normal core of $E$ in $\Gamma$, one can assume $E$ is normal in $\Gamma$. Set $F = \varphi(E) = \psi(E)$. Since $E$ is normal in $A$, the subgroup $F =  \varphi(E)$ is normal in $B = \varphi(A)$. Hence $F$ is subnormal in $\Gamma$. 

Using Lemma \ref{lem-chasles} for the two sequences of inclusions $F \leq B \leq \Gamma$ and $E \leq A \leq \Gamma$ we see that \[  n_S^\Gamma(F) = n_S^\Gamma(B) + n_S^B(F) \] and  \[ n_S^\Gamma(E) = n_S^\Gamma(A) + n_S^A(E).\] Taking the difference, we obtain  \[ n_S^\Gamma(F) - n_S^\Gamma(E) = n_S^\Gamma(B) + n_S^B(F) - (n_S^\Gamma(A) + n_S^A(E)).\] Now since $\varphi$ is an isomorphism from $A$ to $B$ sending $E$ to $F$, one has $n_S^A(E) = n_S^B(F)$. Hence those terms cancel in the previous equality, and we obtain $n_S^\Gamma(B) - n_S^\Gamma(A) = n_S^\Gamma(F) - n_S^\Gamma(E)$. Since the situation is symmetric in $\varphi, \psi$, one also has $n_S^\Gamma(D) - n_S^\Gamma(C) = n_S^\Gamma(F) - n_S^\Gamma(E)$. Whence the conclusion. 
\end{proof}

The following lemma is classical. 

\begin{Lemma}  \label{lem-existence-subnormal-rep}
	Let $A,B$ be finite index subgroups of $\Gamma$ and let $\varphi: A \to B$ be an isomorphism. Then there are finite index subgroup $A_1,A_2$ of $A$ and $B_1$ of $B$ such that  $A_1 \lhd A_2 \lhd \Gamma$, $B_1 \lhd \Gamma$ and $B_1 = \varphi(A_1)$.
\end{Lemma}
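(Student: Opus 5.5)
We build the subgroups in the order $B_1$, $A_1$, $A_2$: first choose $B_1$ normal in $\Gamma$ with $\varphi^{-1}(B_1)\leq A_2\leq A$, and then take $A_1=\varphi^{-1}(B_1)$. The basic tool is the normal core. For a finite index subgroup $H$ of $\Gamma$, its normal core $\mathrm{core}_\Gamma(H)=\bigcap_{g\in\Gamma} gHg^{-1}$ is a finite index normal subgroup of $\Gamma$ contained in $H$. This holds because it is the kernel of the action of $\Gamma$ on the finite set $\Gamma/H$.

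First set $A_2 = \mathrm{core}_\Gamma(A)$, which is normal and of finite index in $\Gamma$ and contained in $A$. Its image $\varphi(A_2)$ has finite index in $B$, since $\varphi$ is an isomorphism, and hence finite index in $\Gamma$. Now set $B_1 = \mathrm{core}_\Gamma(\varphi(A_2))$. This is normal and of finite index in $\Gamma$, and it is contained in $\varphi(A_2)\leq B$.

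Next define $A_1 = \varphi^{-1}(B_1)$. Since $B_1\leq \varphi(A_2)$, we get $A_1\leq A_2$, and by construction $\varphi(A_1)=B_1$. The map $\varphi$ restricts to an isomorphism $A_2\to\varphi(A_2)$. Because $B_1$ is normal in $\Gamma$, it is normal in $\varphi(A_2)$, so pulling back through this isomorphism gives $A_1\lhd A_2$. The indices also behave: $(A_2:A_1) = (\varphi(A_2):B_1)$ is finite, so $A_1$ has finite index in $A_2$, hence in $A$ and in $\Gamma$.

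This gives $A_1\lhd A_2\lhd\Gamma$, $B_1\lhd\Gamma$ and $B_1=\varphi(A_1)$, as required. There is no real obstacle here. The only point needing care is that $A_1$ is in general only normal in $A_2$ and not in $\Gamma$, because $\varphi$ does not intertwine conjugation by elements of $\Gamma$ outside $A_2$. This is exactly why the statement asks for a two-step subnormal chain on the domain side and only a normal subgroup on the range side.
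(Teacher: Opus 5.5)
Your proof is correct and is the same construction as the paper's: take $A_2$ to be the normal core of $A$ in $\Gamma$, let $B_1$ be the normal core of $\varphi(A_2)$, and set $A_1=\varphi^{-1}(B_1)$. You also spell out the normality of $A_1$ in $A_2$ and the finiteness of the indices, which the paper leaves implicit.
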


\begin{proof}
	Let $A_2$ denote the normal core of $A$ in $\Gamma$, and $B_2 = \varphi(A_2)$. Now let $B_1$ denote the  normal core of $B_2$ in $\Gamma$, and let $A_1 = \varphi^{-1}(B_1)$. By construction one has $A_1 \lhd A_2 \lhd \Gamma$, $B_1 \lhd \Gamma$ and $B_1 = \varphi(A_1)$.
\end{proof}

An isomorphism  $\varphi: A \to B$ between finite index subgroups of $\Gamma$ will be called \textbf{$\Gamma$-subnormal} if both $A$ and $B$ are subnormal subgroups of $\Gamma$. Lemma \ref{lem-existence-subnormal-rep} implies every element of the commensurator $\Comm(\Gamma)$ admits a $\Gamma$-subnormal representative. 

We denote by $\Z^{(\simple)}$ the free abelian group with basis $\simple$, viewed as the group of finitely supported functions $\simple \to \Z$. For $S \in \simple$ we denote by $\delta_S$ the Dirac function at $S$. 

\begin{Theorem} \label{thm-definition-morphism}
	The map $d^\Gamma : \Comm(\Gamma) \to \Z^{(\simple)}$ defined by \[c = [\varphi: A \to B] \mapsto d^\Gamma(c) =  \sum_{S \in \simple} (n_S^\Gamma(B) - n_S^\Gamma(A)) \delta_S,\] where $\varphi: A \to B$ is a $\Gamma$-subnormal representative of $c$, is well-defined and is a group homomorphism. 
\end{Theorem}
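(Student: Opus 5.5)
Well-definedness is essentially already done. By Lemma \ref{lem-existence-subnormal-rep}, every $c \in \Comm(\Gamma)$ has a $\Gamma$-subnormal representative: $A_1 \lhd A_2 \lhd \Gamma$ makes $A_1$ subnormal, and $B_1 \lhd \Gamma$. If $\varphi: A \to B$ and $\psi: C \to D$ are two $\Gamma$-subnormal representatives of $c$, Lemma \ref{lem-difference-independent-rep} gives $n_S^\Gamma(B) - n_S^\Gamma(A) = n_S^\Gamma(D) - n_S^\Gamma(C)$ for every $S$. So the value does not depend on the representative. The sum is finitely supported, since only finitely many $S$ occur in composition series going from $A$ or $B$ to $\Gamma$. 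Hence $d^\Gamma(c) \in \Z^{(\simple)}$.

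For the homomorphism property, take $c_1 = [\varphi]$ and $c_2 = [\psi]$. The plan is to choose representatives that compose on the nose, $\varphi: A \to B$ and $\psi: B \to D$ with $A, B, D$ all subnormal in $\Gamma$. Then $c_2 c_1 = [\psi \circ \varphi : A \to D]$, and the formula telescopes:
\[ (n_S^\Gamma(D) - n_S^\Gamma(B)) + (n_S^\Gamma(B) - n_S^\Gamma(A)) = n_S^\Gamma(D) - n_S^\Gamma(A). \]

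To produce such representatives, I would start from any $\Gamma$-subnormal representatives $\varphi: A \to B$ and $\psi: C \to D$. Let $N$ be a finite index normal subgroup of $\Gamma$ contained in $B \cap C$, for instance the normal core of $B \cap C$. Restrict $\psi$ to $N$, giving $\psi|_N : N \to \psi(N)$. Since $N$ is normal in $C$, its image $\psi(N)$ is normal in $D$, hence subnormal in $\Gamma$. Restrict $\varphi$ to $\varphi^{-1}(N)$, giving $\varphi^{-1}(N) \to N$. Since $N \lhd B$, we have $\varphi^{-1}(N) \lhd A$, so $\varphi^{-1}(N)$ is subnormal in $\Gamma$. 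These restrictions are $\Gamma$-subnormal representatives of $c_1$ and $c_2$ with matching middle term $N$, so the telescoping computation applies.

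The only point needing care is this preservation of subnormality under restriction and transport by isomorphisms. It is handled as above, using that an isomorphism carries normal subgroups of its domain to normal subgroups of its image. No real obstacle is expected. Finally, $d^\Gamma$ of the identity is $0$, and inverses are handled automatically once multiplicativity holds.
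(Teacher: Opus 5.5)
Your proposal is correct and follows essentially the same route as the paper. Well-definedness comes from Lemmas \ref{lem-existence-subnormal-rep} and \ref{lem-difference-independent-rep}, and multiplicativity comes from restricting both representatives to a common middle subgroup and telescoping. The only difference is that you take the middle term to be the normal core of $B \cap C$ rather than $B \cap C$ itself, which neatly avoids needing the fact that an intersection of subnormal subgroups is subnormal.
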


Before proving the theorem, we introduce the following notation: 

\begin{Definition} \label{defi-map-dS}
For  $S \in \simple$, we consider the map $d^\Gamma_S : \Comm(\Gamma) \to \Z$ defined by \[c = [\varphi: A \to B] \mapsto d^\Gamma_S(c) =  n_S^\Gamma(B) - n_S^\Gamma(A), \] where $\varphi: A \to B$ is a $\Gamma$-subnormal representative of $c$.
\end{Definition}

For every $c \in  \Comm(\Gamma)$, one has \[ d^\Gamma(c) =  \sum_{S \in \simple} d^\Gamma_S(c) \delta_S,   \] so the statement of Theorem \ref{thm-definition-morphism} will follow from the following proposition:

\begin{Proposition} \label{prop-one-prime-morphism}
	For every  $S \in \simple$, the map $d^\Gamma_S : \Comm(\Gamma) \to \Z$ is well-defined and is a group homomorphism. 
\end{Proposition}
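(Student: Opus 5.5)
Well-definedness comes straight from the earlier lemmas. By Lemma \ref{lem-existence-subnormal-rep}, every $c \in \Comm(\Gamma)$ has a $\Gamma$-subnormal representative $\varphi: A \to B$. Lemma \ref{lem-difference-independent-rep} shows that the integer $n_S^\Gamma(B) - n_S^\Gamma(A)$ does not depend on which such representative is chosen. So $d^\Gamma_S$ is well-defined. For use below, note two facts. First, any $\Gamma$-subnormal representative may be restricted to a finite index subgroup $E \leq A$ that is normal in $\Gamma$, and $\varphi|_E : E \to \varphi(E)$ is again $\Gamma$-subnormal, since $\varphi(E) \lhd B$. Second, Lemma \ref{lem-difference-independent-rep} gives the same value for the restriction.

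For the homomorphism property, take $c_1 = [\varphi: A \to B]$ and $c_2 = [\psi: C \to D]$, both $\Gamma$-subnormal. The plan is to replace these by representatives whose domains and ranges line up, so that the composite is computed on the nose. Let $N$ be a finite index subgroup of $B \cap C$ that is normal in $\Gamma$, for instance the normal core of $B \cap C$. Restrict $\varphi$ to $\varphi^{-1}(N) \to N$. The domain $\varphi^{-1}(N)$ is normal in $A$, so it is subnormal in $\Gamma$, and $N \lhd \Gamma$. Restrict $\psi$ to $N \to \psi(N)$, where $\psi(N) \lhd D$ is subnormal. Both restrictions are $\Gamma$-subnormal representatives of $c_1$ and $c_2$ respectively. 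The composite $c_2 c_1$ is then represented by $\psi\circ\varphi : \varphi^{-1}(N) \to \psi(N)$, which is $\Gamma$-subnormal. Hence
\[ d^\Gamma_S(c_2c_1) = n_S^\Gamma(\psi(N)) - n_S^\Gamma(\varphi^{-1}(N)) = \bigl(n_S^\Gamma(\psi(N)) - n_S^\Gamma(N)\bigr) + \bigl(n_S^\Gamma(N) - n_S^\Gamma(\varphi^{-1}(N))\bigr), \]
which equals $d^\Gamma_S(c_2) + d^\Gamma_S(c_1)$ by well-definedness.

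The only point needing care is that the composition convention in \S\ref{subsec-prelim-comm} matches this computation. The composite of $\varphi$ and $\psi$ is defined on $\varphi^{-1}(B\cap C)$. Its restriction to $\varphi^{-1}(N)$ is an equivalent representative, since it is a finite index subgroup of that domain. I expect no real obstacle. The key step is choosing $N$ normal in $\Gamma$, so that subnormality is preserved at every stage, and this is exactly the device used in Lemma \ref{lem-difference-independent-rep}. The inverse property $d^\Gamma_S(c^{-1}) = -d^\Gamma_S(c)$ is automatic for a map satisfying the additive rule on a group, and it is also clear directly from $\varphi^{-1}: B \to A$.
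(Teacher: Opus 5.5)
Your proof is correct and follows the paper's argument. Well-definedness comes from Lemmas \ref{lem-existence-subnormal-rep} and \ref{lem-difference-independent-rep}; additivity comes from restricting the two representatives so that the range of one equals the domain of the other, then telescoping through that middle subgroup. The only difference is that you take the middle subgroup to be the normal core $N$ of $B \cap C$ rather than $B \cap C$ itself, which avoids the paper's use of the fact that an intersection of two subnormal subgroups is subnormal.
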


\begin{proof}
First recall  every element of $\Comm(\Gamma)$  admits a $\Gamma$-subnormal representative by Lemma \ref{lem-existence-subnormal-rep}. Moreover Lemma  \ref{lem-difference-independent-rep} ensures that, given  $c  \in \Comm(\Gamma)$ such that $c$ admits $\varphi: A \to B$ as a $\Gamma$-subnormal representative, $d^\Gamma_S(c) =  n_S^\Gamma(B) - n_S^\Gamma(A)$ does not depend on the choice of the $\Gamma$-subnormal representative of $c$. This shows $d^\Gamma_S$ is well-defined. Let us prove it is a homomorphism. Let $c_1, c_2 \in \Comm(\Gamma)$, and choose $\Gamma$-subnormal representatives $\psi:C\to D$ and $\varphi:A\to B$  respectively of $c_1$ and $c_2$. Set $E = \varphi^{-1}(B \cap C)$ and $F = \psi(B \cap C)$. The subgroup $B \cap C$ is subnormal in $\Gamma$ as an intersection of two subnormal subgroups. In particular $B \cap C$ is subnormal in $B$. Therefore $E$ is subnormal in $A = \varphi^{-1}(B)$, and hence $E$ is subnormal in $\Gamma$. Similarly $F$ is subnormal in $\Gamma$. The restriction of $\psi \circ \varphi$ to $E$ induces an isomorphism from $E$ to $F$, which by definition of the composition in $\Comm(\Gamma)$ is a representative of $c_1 c_2$. Therefore one has \[d^\Gamma_S(c_1 c_2) = n_S^\Gamma(F) - n_S^\Gamma(E) = n_S^\Gamma(F) - n_S^\Gamma(B \cap C) + n_S^\Gamma(B \cap C) - n_S^\Gamma(E). \] Now by construction $\psi_{|B \cap C}: B \cap C \to F$ and $\varphi_{|E}: E \to B \cap C$ are $\Gamma$-subnormal representatives of $c_1$ and $c_2$ respectively. Hence $ n_S^\Gamma(F) - n_S^\Gamma(B \cap C) = d^\Gamma_S(c_1)$ and $n_S^\Gamma(B \cap C) - n_S^\Gamma(E) = d^\Gamma_S(c_2)$.  So all together we have $d_S(c_1 c_2) = d_S(c_1) + d_S(c_2)$, as desired. 
\end{proof}

\begin{Remark} \label{rmk-CommSN}
The kernel of $d^\Gamma$ is the set of elements of $\Comm(\Gamma)$ that admit a $\Gamma$-subnormal representative $\varphi: A \to B$ such that $n_S^\Gamma(B) = n_S^\Gamma(A)$ for every $S \in \simple$. This means that the sets of composition factors appearing in composition series going from $A$ to $\Gamma$ and $B$ to $\Gamma$ are the same (taking multiplicity into account). That set was denoted $\Comm_{SN}(\Gamma)$ in \cite{BELBRVW-comm-free}, and it has already been showed there that it is a subgroup of $\Comm(\Gamma)$.
\end{Remark}

\begin{Lemma} \label{lem-subnormal-rep-inside-subnormal}
	Let $A,B,C$ be finite index subnormal subgroups of $\Gamma$ and let $\varphi: A \to B$ be an isomorphism. Then there are finite index subgroups $A' \leq A$ and  $B' \leq B$ such that $A',B'$ are contained in $C$,  $A',B'$ are subnormal in $C$, and $B' = \varphi(A')$.
\end{Lemma}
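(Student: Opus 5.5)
The plan is to use the same normal-core trick as in Lemma \ref{lem-existence-subnormal-rep}, but performed inside $C$ instead of $\Gamma$. The only standard facts needed are these: intersections of finite index subnormal subgroups are subnormal, and the image of a subnormal subgroup of $A$ under the isomorphism $\varphi$ is subnormal in $B$.

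First I shrink the domain. Set $A_0 = A \cap C$. Since $A$ and $C$ are subnormal of finite index in $\Gamma$, $A_0$ is subnormal of finite index in $\Gamma$. Hence $A_0$ is subnormal in $C$ and also subnormal in $A$. Set $B_0 = \varphi(A_0)$. Because $\varphi$ is an isomorphism $A \to B$ and $A_0$ is subnormal in $A$, the group $B_0$ is subnormal in $B$, hence subnormal in $\Gamma$. It has finite index in $B$, hence in $\Gamma$.

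Next I shrink the range. Set $B' = B_0 \cap C$, which is again finite index and subnormal in $\Gamma$. It is contained in $C$, so it is subnormal in $C$. It is also contained in $B_0$, so it is subnormal in $B_0$. Now put $A' = \varphi^{-1}(B') \leq A_0$. Since $B'$ is subnormal in $B_0$ and $\varphi$ restricts to an isomorphism $A_0 \to B_0$, the group $A'$ is subnormal in $A_0$. As $A_0$ is subnormal in $C$, this gives $A'$ subnormal in $C$. Also $A' \leq A_0 \leq C$, $A'$ has finite index in $A_0$ and hence in $A$, and $\varphi(A') = B'$ by construction.

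The only step needing care is the bookkeeping of subnormality: at each stage one has to remember in which ambient group (namely $\Gamma$, $A$, $B$, $A_0$, $B_0$ or $C$) subnormality is known, and transport it correctly along $\varphi$. In particular, the fact that an intersection of two subnormal subgroups is subnormal is used twice, once in $\Gamma$ for $A\cap C$ and once for $B_0 \cap C$. This was already used in the proof of Proposition \ref{prop-one-prime-morphism}. Beyond that the argument is routine.
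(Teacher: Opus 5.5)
Your proof is correct and follows essentially the paper's approach: your $A' = \varphi^{-1}(\varphi(A\cap C)\cap C)$ is exactly the paper's $A\cap C\cap\varphi^{-1}(B\cap C)$, and you use the same transport of subnormality along $\varphi$ together with the fact that an intersection of subnormal subgroups is subnormal. The only difference is cosmetic: you obtain subnormality of $B'$ in $C$ directly from $B' = \varphi(A\cap C)\cap C$, instead of by the paper's ``similar'' symmetric argument.
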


\begin{proof}
The subgroups $A \cap C$ and $B \cap C$ are subnormal respectively in $A$ and $B$. Hence $A' := A \cap C \cap \varphi^{-1}(B \cap C)$ is subnormal in $A$. In particular $A'$ is subnormal in $A \cap C$, and hence in $C$ since $A \cap C$ is subnormal  in $C$. The argument to see that $B' := \varphi(A')$ is subnormal in $C$ is similar.  
\end{proof}

If $\Lambda$ is a finite index subgroup of $\Gamma$, then any virtual isomorphism of $\Lambda$ is also a virtual isomorphism of $\Gamma$. This induces a homomorphism $i_{\Lambda,\Gamma} : \Comm(\Lambda) \to \Comm(\Gamma)$, which is easily seen to be an isomorphism.

\begin{Proposition} \label{prop-compatibility-hom-finite-index}
	Let $\Lambda$ be a finite index subgroup of $\Gamma$, and let $i_{\Lambda,\Gamma} : \Comm(\Lambda) \to \Comm(\Gamma)$ be the isomorphism induced by the inclusion of $\Lambda$ in $\Gamma$. Then $d^\Gamma \circ i_{\Lambda,\Gamma}= d^\Lambda$. 
\end{Proposition}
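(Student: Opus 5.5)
Fix $c \in \Comm(\Lambda)$ and compare $d^\Lambda(c)$ with $d^\Gamma(i_{\Lambda,\Gamma}(c))$ using one well-chosen representative. The obstacle is that a $\Lambda$-subnormal representative need not be $\Gamma$-subnormal, since $\Lambda$ itself need not be subnormal in $\Gamma$. I would first replace $\Lambda$ by a subgroup that is normal in $\Gamma$, and then handle the general case through a two-step comparison.

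\emph{Case $\Lambda$ subnormal in $\Gamma$.} Take a $\Lambda$-subnormal representative $\varphi : A \to B$ of $c$, which exists by Lemma \ref{lem-existence-subnormal-rep}. Since $A, B$ are subnormal in $\Lambda$ and $\Lambda$ is subnormal in $\Gamma$, the map $\varphi$ is also a $\Gamma$-subnormal representative of $i_{\Lambda,\Gamma}(c)$. Lemma \ref{lem-chasles} gives $n_S^\Gamma(A) = n_S^\Gamma(\Lambda) + n_S^\Lambda(A)$, and similarly for $B$. Subtracting, the term $n_S^\Gamma(\Lambda)$ cancels, so $d^\Gamma_S(i_{\Lambda,\Gamma}(c)) = d^\Lambda_S(c)$ for every $S$.

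\emph{General case.} Let $N$ be the normal core of $\Lambda$ in $\Gamma$. Then $N$ has finite index in $\Gamma$, is normal in $\Gamma$, and is normal in $\Lambda$. By functoriality of inclusions, $i_{N,\Gamma} = i_{\Lambda,\Gamma} \circ i_{N,\Lambda}$: both send a virtual isomorphism of $N$ to itself, viewed in $\Gamma$. All these maps are isomorphisms, so for $c \in \Comm(\Lambda)$ we may write $c = i_{N,\Lambda}(c')$. Applying the subnormal case twice, to $N \lhd \Lambda$ and to $N \lhd \Gamma$, gives
\[ d^\Gamma(i_{\Lambda,\Gamma}(c)) = d^\Gamma(i_{N,\Gamma}(c')) = d^N(c') = d^\Lambda(i_{N,\Lambda}(c')) = d^\Lambda(c). \]

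The only point needing care is surjectivity of $i_{N,\Lambda}$, i.e.\ that every class in $\Comm(\Lambda)$ has a representative with domain and image inside $N$. I would prove this directly rather than rely on ``easily seen'': restrict a representative $\varphi : A \to B$ to $A \cap N \cap \varphi^{-1}(B\cap N)$, which has finite index in $A$ and is mapped by $\varphi$ into $N$. Lemma \ref{lem-subnormal-rep-inside-subnormal} gives the same conclusion, with the extra property that the new domain and image are subnormal in $N$.
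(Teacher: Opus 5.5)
Your proposal is correct and follows essentially the paper's route: pass to the normal core of $\Lambda$ in $\Gamma$, take a representative whose domain and image are subnormal in that core (the paper invokes Lemma \ref{lem-subnormal-rep-inside-subnormal} for this), and cancel the constant term coming from the core using Lemma \ref{lem-chasles}. Your factorization $i_{N,\Gamma} = i_{\Lambda,\Gamma}\circ i_{N,\Lambda}$, with the subnormal case applied twice, packages the same computation. It also makes explicit the identity $n_S^\Lambda(B)-n_S^\Lambda(A) = n_S^{N}(B)-n_S^{N}(A)$, which the paper's proof uses only tacitly.
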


\begin{proof}
Denote by $\Lambda'$  the normal core of $\Lambda$ in $\Gamma$.  Take $c  \in \Comm(\Lambda)$, and let $\varphi: A \to B$ be a $\Lambda$-subnormal representative of $c$ in $\Comm(\Lambda)$. According to Lemma \ref{lem-subnormal-rep-inside-subnormal} one can assume $A,B$ are contained in $\Lambda'$. In particular $A,B$ are subnormal in $\Gamma$, and $\varphi: A \to B$ be a $\Gamma$-subnormal representative of $c$ in $\Comm(\Gamma)$. Take $S \in \simple$. We have $n_S^\Gamma(A) = n_S^\Gamma(\Lambda') + n_S^{\Lambda'}(A)$ and $n_S^\Gamma(B) = n_S^\Gamma(\Lambda') + n_S^{\Lambda'}(B)$ by Lemma \ref{lem-chasles}. Hence  $n_S^\Gamma(B) - n_S^\Gamma(A) = n_S^{\Lambda'}(B) -  n_S^{\Lambda'}(A)$. Since $S$ is arbitrary, this proves the statement.
\end{proof}

The following result was proven in \cite{BELBRVW-comm-free} under the additional assumption that $\Gamma$ is finitely generated. 

\begin{Proposition} \label{prop-AComm-contained-kernel}
The subgroup $\AComm(\Gamma)$ is contained in the kernel of $d^\Gamma$. 
\end{Proposition}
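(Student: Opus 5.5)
Since $d^\Gamma$ is a homomorphism (Theorem \ref{thm-definition-morphism}), its kernel is a subgroup of $\Comm(\Gamma)$. So it suffices to show that each generator lies in the kernel. Concretely, I will show that $d^\Gamma(a_A(\alpha)) = 0$ for every finite index subgroup $A$ of $\Gamma$ and every $\alpha \in \Aut(A)$. By Proposition \ref{prop-compatibility-hom-finite-index} applied with $\Lambda = A$, one has $d^\Gamma \circ i_{A,\Gamma} = d^A$. Moreover $a_A(\alpha) = i_{A,\Gamma}(a'(\alpha))$, where $a'(\alpha) = [\alpha : A \to A] \in \Comm(A)$. This reduces the claim to showing $d^A([\alpha]) = 0$ in $\Comm(A)$. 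The point of this reduction is that no finite generation of $\Gamma$ is used; we simply change the ambient group to $A$.

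For that, observe that $\alpha : A \to A$ is itself an $A$-subnormal representative of $[\alpha] \in \Comm(A)$, since its domain and range are both $A$, which is trivially subnormal in $A$. By the definition of $d^A_S$ (well-defined by Proposition \ref{prop-one-prime-morphism}), we get
\[ d^A_S([\alpha]) = n_S^A(A) - n_S^A(A) = 0 \]
for every $S \in \simple$. Hence $d^A([\alpha]) = 0$, and so $d^\Gamma(a_A(\alpha)) = 0$.

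The only point requiring care, though it is minor, is checking that the identification $a_A(\alpha) = i_{A,\Gamma}([\alpha])$ holds. This is immediate from the definitions, since both are the class of the same virtual isomorphism $\alpha : A \to A$ of $\Gamma$. It is also worth noting that Proposition \ref{prop-compatibility-hom-finite-index} was proved for an arbitrary finite index subgroup $\Lambda$, with no subnormality assumption, which is exactly what we need here.
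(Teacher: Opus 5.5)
Your proposal is correct and follows essentially the same argument as the paper. Both view $\alpha\colon A \to A$ as an $A$-subnormal representative of its class in $\Comm(A)$, so that $d^A([\alpha]) = 0$, and then transfer this to $\Comm(\Gamma)$ via Proposition~\ref{prop-compatibility-hom-finite-index}. Your explicit remark that the kernel is a subgroup, so checking generators suffices, is the step the paper leaves implicit.
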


\begin{proof}
Let $\Lambda$ be a finite index subgroup of $\Gamma$. Take $\varphi \in \Aut(\Lambda)$. By definition $\varphi: \Lambda \to \Lambda$ is a $\Lambda$-subnormal representative of $[\varphi]$ in $\Comm(\Lambda)$. Since $n^\Lambda_S(\Lambda) = 0$ for every $S \in \simple$, the element $[\varphi]$ of $\Comm(\Lambda)$ belongs to the kernel of $d^\Lambda$. Proposition \ref{prop-compatibility-hom-finite-index} ensures that the element of $ \Comm(\Gamma)$ defined by $\varphi$ belongs to the kernel of $d^\Gamma$. Since $\Lambda$ was an arbitrary finite index subgroup of $\Gamma$, this shows $\AComm(\Gamma)$ is contained in the kernel of $d^\Gamma$. 
\end{proof}

\subsection{On the image of the homomorphism  $d^\Gamma$}

In this subsection we collect two basic observations regarding the following homomorphism: 

\begin{Definition}
	We denote by  $\pi: \Z^{(\simple)} \to \Q^\times_{>0}$ the  homomorphism defined by $(x_S)_\simple \mapsto \prod_\simple  |S|^{x_S}$. 
\end{Definition}

\begin{Lemma} \label{lem-im-contained-ker-same-index}
If $c \in \Comm(\Gamma)$ admits $\varphi: A \to B$ as a $\Gamma$-subnormal representative, then \[\pi \circ d^\Gamma (c) = \frac{(\Gamma : B)}{(\Gamma : A)}. \] In particular if $\Gamma$ has the property that any two isomorphic finite index subgroups of $\Gamma$  have the same index, then the image of $d^\Gamma$ is contained in the kernel of $\pi$.
\end{Lemma}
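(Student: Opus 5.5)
The proof is a direct computation from Lemma \ref{lem-formula-index} and the definition of $d^\Gamma$. Let $c \in \Comm(\Gamma)$ with $\Gamma$-subnormal representative $\varphi: A \to B$. By Theorem \ref{thm-definition-morphism}, $d^\Gamma(c) = \sum_S (n_S^\Gamma(B) - n_S^\Gamma(A))\delta_S$. Since this function is finitely supported and $\pi$ is a homomorphism, we get
\[ \pi \circ d^\Gamma(c) = \prod_{S \in \simple} |S|^{n_S^\Gamma(B) - n_S^\Gamma(A)} = \frac{\prod_{S} |S|^{n_S^\Gamma(B)}}{\prod_{S} |S|^{n_S^\Gamma(A)}}. \]
Here both products are finite, because only finitely many $n_S^\Gamma(A)$ and $n_S^\Gamma(B)$ are nonzero. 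Applying Lemma \ref{lem-formula-index} to $A$ and to $B$, which is legitimate since both are finite index subnormal subgroups, identifies the numerator with $(\Gamma : B)$ and the denominator with $(\Gamma : A)$. This gives the displayed formula.

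For the second assertion, assume that isomorphic finite index subgroups of $\Gamma$ have the same index. Take any $c \in \Comm(\Gamma)$. By Lemma \ref{lem-existence-subnormal-rep}, $c$ has a $\Gamma$-subnormal representative $\varphi: A \to B$. Then $A \simeq B$ via $\varphi$, so $(\Gamma : A) = (\Gamma : B)$, and the formula gives $\pi(d^\Gamma(c)) = 1$. Thus $d^\Gamma(\Comm(\Gamma)) \subseteq \ker \pi$.

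There is no real obstacle. The only points needing care are that the representative must be $\Gamma$-subnormal, so that Lemma \ref{lem-formula-index} applies, and that the value of $d^\Gamma(c)$ does not depend on the chosen representative. The latter is already guaranteed by Proposition \ref{prop-one-prime-morphism}.
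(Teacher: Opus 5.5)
Your proposal is correct and follows essentially the same route as the paper: expand $\pi \circ d^\Gamma(c)$ as a product and identify numerator and denominator via Lemma \ref{lem-formula-index}. For the second assertion, which the paper leaves as immediate, your explicit use of Lemma \ref{lem-existence-subnormal-rep} to obtain a $\Gamma$-subnormal representative is exactly the intended justification.
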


\begin{proof}
One has \[\pi \circ d^\Gamma (c) = \prod_\simple  |S|^{n_S^\Gamma(B) - n_S^\Gamma(A)} = \frac{\prod_\simple  |S|^{n_S^\Gamma(B)}}{\prod_\simple  |S|^{n_S^\Gamma(A)}} = \frac{(\Gamma : B)}{(\Gamma : A)}\] by Lemma  \ref{lem-formula-index}. The second assertion is clearly consequence of the first.
\end{proof}

Recall that for $S \in \simple$  we denote by $\delta_S \in \Z^{(\simple)}$ the Dirac function at $S$. When $S = C_p$ is cyclic of order $p$ then we will write $\delta_p$ instead of $\delta_{C_p}$. We denote by $\simpleab$ the collection of those finite simple groups that are cyclic of primer order. We identify $\Z^{(\simpleab)}$ as the subgroup of $\Z^{(\simple)}$ generated by the elements $\delta_p$ when $p$ varies among primes. 

\begin{Definition} \label{def-notation-ker-pi}
For an integer $n \geq 2$ with prime decomposition $n=p_1^{\alpha_1} \cdots p_r^{\alpha_r}$, we write $g_n = \sum \alpha_i \delta_{p_i} \in \Z^{(\simple)}$. For $S \in \simple$ with $S$ non-abelian, we denote $f_S = \delta_S - g_{|S|}$. 
\end{Definition}

\begin{Lemma} \label{lem-generation-kernel-abelian}
	The following hold: \begin{enumerate}
		\item The restriction of $\pi$ to  $\Z^{(\simpleab)}$ induces an isomorphism $\Z^{(\simpleab)} \to \Q^\times_{>0}$. 
		\item $\ker(\pi)$ is generated by the elements $f_S$ when $S$ varies among non-abelian finite simple groups. 
	\end{enumerate}
\end{Lemma}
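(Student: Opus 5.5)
For part (1), I will compute directly. The restriction of $\pi$ to $\Z^{(\simpleab)}$ sends $\sum_p x_p \delta_p$ to $\prod_p p^{x_p}$, since $|C_p| = p$. By uniqueness of prime factorization in $\Q^\times_{>0}$, every positive rational is uniquely of the form $\prod_p p^{x_p}$ with $(x_p)$ finitely supported. Hence this restriction is a bijective homomorphism, that is, an isomorphism. Equivalently, $\Q^\times_{>0}$ is free abelian on the primes, and $\pi$ maps the basis $\{\delta_p\}$ bijectively onto that basis.

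For part (2), first note that each $f_S$ lies in $\ker(\pi)$. Indeed, $\pi(\delta_S) = |S|$, and $\pi(g_{|S|}) = \prod p_i^{\alpha_i} = |S|$, so $\pi(f_S) = 1$. Conversely, let $x = (x_S)_\simple \in \ker(\pi)$. Put
\[ y = x - \sum_{S \text{ non-abelian}} x_S f_S , \]
where the sum is finite because $x$ is finitely supported. For each non-abelian $S$, the coefficient of $\delta_S$ in $y$ is $x_S - x_S = 0$, since $g_{|S|}$ lies in $\Z^{(\simpleab)}$ and the $f_{S'}$ for $S' \neq S$ have no $\delta_S$ component. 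Thus $y \in \Z^{(\simpleab)}$. Moreover $\pi(y) = \pi(x) = 1$, so part (1) forces $y = 0$. Hence $x = \sum x_S f_S$ lies in the subgroup generated by the $f_S$.

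The only step needing care is this bookkeeping: checking that subtracting the $f_S$ kills exactly the non-abelian coordinates and leaves a purely abelian element, to which the injectivity in (1) applies. I do not expect a genuine obstacle. The argument also shows that the $f_S$ form a basis of $\ker(\pi)$.
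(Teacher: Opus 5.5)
Your proof is correct and is essentially the paper's argument: the paper sets $A=\langle f_S\rangle\subseteq\ker(\pi)$, notes $\Z^{(\simple)}=A+\Z^{(\simpleab)}$, and concludes $\ker(\pi)=A+(\ker(\pi)\cap\Z^{(\simpleab)})=A$ by part (1), which is your elementwise subtraction $y=x-\sum x_S f_S$ phrased via the modular law. Your added remark that the $f_S$ form a basis of $\ker(\pi)$ is also correct, since the coefficients can be read off the non-abelian coordinates.
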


\begin{proof}
	The first point is clear. For the second, one has $\pi( \delta_S) = |S|$ and $  \pi(g_{|S|})  = |S|$, so $f_S$ indeed lies in $\ker(\pi)$. Let $A$ denote the subgroup generated by $f_S$ when $S$ varies among non-abelian finite simple groups. Since $f_S \in \delta_S + \Z^{(\simpleab)}$, one has $\Z^{(\simple)} = A + \Z^{(\simpleab)}$, and hence $\ker(\pi) = A + (\ker(\pi) \cap \Z^{(\simpleab)})$. Now $\ker(\pi) \cap \Z^{(\simpleab)} = 0$ by the first point, so $\ker(\pi) = A$. 
\end{proof}

\subsection{Commensurators of profinite completions}

As mentioned in \S \ref{subsec-prelim-comm}, both the group of commensurators $ \Comm(U)$ and the homomorphism $d^U : \Comm(U) \to \Z^{(\simple)}$ can be defined when $U$ is a profinite group. Although this will not be used later on in this paper, in this subsection we make the observation that when passing from a group $\Gamma$ to its profinite completion $\widehat{\Gamma}$, the expected compatibility between $d^\Gamma : \Comm(\Gamma) \to \Z^{(\simple)}$ and $d^{\widehat{\Gamma}} : \Comm(\widehat{\Gamma}) \to \Z^{(\simple)}$ holds true. 

So let $\Gamma$ be a group, and denote by $\iota: \Gamma \to \widehat{\Gamma}$ the natural homomorphism from $\Gamma$ to $\widehat{\Gamma}$. The mapping that assigns to each finite index subgroup $A$ of $\Gamma$ its image closure $\overline{\iota(A)}$ in $\widehat{\Gamma}$ is a bijection between finite index subgroups of $\Gamma$ and open subgroups of $\widehat{\Gamma}$. Moreover the natural homomorphism $\widehat{A} \to \overline{\iota(A)}$ is an isomorphism  \cite[Lemma 3.2.6]{RZ-book-profinite} In the sequel we always see $\widehat{A}$ as an open subgroup of $\widehat{\Gamma}$.  Any virtual isomorphism $\varphi: A \to B$ of $\Gamma$ induces an isomorphism $\widehat{\varphi}: \widehat{A} \to \widehat{B}$, which is therefore viewed as a virtual isomorphism of $\widehat{\Gamma}$. The map $\varphi \mapsto \widehat{\varphi}$ descends to a map $\eta_\Gamma: \Comm(\Gamma) \to \Comm(\widehat{\Gamma}) $. Moreover this map is a homomorphism. 

\begin{Proposition} \label{prop-completion-same-det-hom}
The diagram 
 \[ \begin{tikzcd}
\Comm(\Gamma) \arrow [r, "\eta_\Gamma"]  \arrow [dr, "d^\Gamma"] & \Comm(\widehat{\Gamma}) \arrow [d, "d^{\widehat{\Gamma}}"] \\
 &  \Z^{(\simple)}
\end{tikzcd} \]
commutes.
\end{Proposition}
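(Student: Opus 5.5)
The plan is to compute both sides on a single well-chosen representative. Take $c \in \Comm(\Gamma)$ and, by Lemma \ref{lem-existence-subnormal-rep}, choose a $\Gamma$-subnormal representative $\varphi: A \to B$. Then $\eta_\Gamma(c) = [\widehat{\varphi}: \widehat{A} \to \widehat{B}]$, where $\widehat{A}$ and $\widehat{B}$ are identified with the open subgroups $\overline{\iota(A)}$ and $\overline{\iota(B)}$ of $\widehat{\Gamma}$. It therefore suffices to show two things. First, that $\widehat{\varphi}$ is a $\widehat{\Gamma}$-subnormal representative. Second, that for every finite index subnormal subgroup $A$ of $\Gamma$ and every $S \in \simple$ one has $n_S^{\widehat{\Gamma}}(\overline{\iota(A)}) = n_S^\Gamma(A)$. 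Granting these, the definitions of $d^{\widehat{\Gamma}}$ and $d^\Gamma$ give $d^{\widehat{\Gamma}}(\eta_\Gamma(c)) = d^\Gamma(c)$ coordinatewise.

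Both facts follow from one observation about the correspondence $H \mapsto \overline{\iota(H)}$ between finite index subgroups of $\Gamma$ and open subgroups of $\widehat{\Gamma}$. Suppose $K \leq H$ are finite index subgroups of $\Gamma$ with $K \lhd H$. Then $\overline{\iota(K)} \lhd \overline{\iota(H)}$, and the natural map $H/K \to \overline{\iota(H)}/\overline{\iota(K)}$ is an isomorphism.

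For normality, note that $\iota(H)$ normalizes $\iota(K)$ and hence its closure, and normalizers of closed subgroups are closed; so $\overline{\iota(H)}$ normalizes $\overline{\iota(K)}$. For the isomorphism, compose $H \to \overline{\iota(H)} \to \overline{\iota(H)}/\overline{\iota(K)}$. The image of this map is dense in a discrete finite quotient, so the map is surjective. Its kernel is $H \cap \iota^{-1}(\overline{\iota(K)})$, which equals $K$ because the correspondence is a bijection: $\iota^{-1}(\overline{\iota(K)})$ is a finite index subgroup whose closure-image is $\overline{\iota(K)}$.

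Now take a composition series $(A_\ell, \ldots, A_0)$ going from $A$ to $\Gamma$. Applying the observation at each step shows that $(\overline{\iota(A_\ell)}, \ldots, \overline{\iota(A_0)})$ is a composition series of open subgroups going from $\widehat{A}$ to $\widehat{\Gamma}$, with the same composition factors. This shows that $\widehat{A}$ is subnormal in $\widehat{\Gamma}$ and that $n_S^{\widehat{\Gamma}}(\widehat{A}) = n_S^\Gamma(A)$; the same argument applies to $B$. Here the invariants in the profinite setting are computed with series of open subgroups. The Schreier refinement argument still shows independence of the chosen series, since all subgroups involved are open of finite index, as the paper asserts carries over.

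The only delicate point is the kernel computation, $\iota^{-1}(\overline{\iota(K)}) \cap H = K$, which relies on the quoted bijection from \cite[Lemma 3.2.6]{RZ-book-profinite}. One should also check that $\widehat{\varphi}$ really maps $\overline{\iota(A)}$ onto $\overline{\iota(B)}$ under the stated identifications. This holds by continuity and density, since $\widehat{\varphi}$ extends $\iota \circ \varphi$.
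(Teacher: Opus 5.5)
Your proposal is correct and follows essentially the same route as the paper: choose a $\Gamma$-subnormal representative and note that $H \mapsto \overline{\iota(H)}$ carries a composition series from $A$ to $\Gamma$ to one from $\widehat{A}$ to $\widehat{\Gamma}$ with isomorphic factors. The paper simply asserts these facts, whereas you also justify them: normality passes to closures because normalizers of closed subgroups are closed, and the factors are isomorphic by density together with the bijection $\iota^{-1}(\overline{\iota(K)}) = K$.
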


\begin{proof}
The mapping $A \mapsto \widehat{A}$ from finite index subgroups of $\Gamma$ to open subgroups of $\widehat{\Gamma}$ sends subnormal subgroups to subnormal subgroups. Moreover any composition series going from $A$ to $\Gamma$ is sent to a composition series going from $\widehat{A}$ to $\widehat{\Gamma}$ with isomorphic composition factors. Hence if $c$ is an element of $\Comm(\Gamma)$ with subnormal representative $\varphi: A \to B$, then $\widehat{\varphi}: \widehat{A} \to \widehat{B}$ is a subnormal representative of $\eta_\Gamma(c)$, and for any $S \in \simple$ we have $n_S^\Gamma(A) = n_S^{\widehat{\Gamma}}(\widehat{A})$ and $n_S^\Gamma(B) = n_S^{\widehat{\Gamma}}(\widehat{B})$. The statement follows. 
\end{proof}

\subsection{Examples}

We examine what is the homomorphism $d^\Gamma$ is some familiar examples. 

\subsubsection{$ \Gamma = \Z^n$, $n \geq 1$}
Then $\Comm(\Gamma) \simeq \GL(n,\Q)$. Clearly the only finite simple groups involved are the cyclic groups of prime order. Hence $d^\Gamma$ takes values in $\Z^{(\simpleab)}$. The subgroup $\SL(n,\Q)$ lies in the kernel of $d^\Gamma$ (as can be check directly, or it also follows from the fact that $\SL(n,\Q)$ is perfect). Moreover for a prime $p$, the diagonal matrix $D$ with entry $p$ at coordinate $i$ and $1$ at other diagonal entries provides an isomorphism between $A = \Z^n$ and $B = \Z^{i-1} \times p \Z \times \Z^{n-i}$. So $d^\Gamma_{p}(D) = 1$ and $d^\Gamma_{q}(D) = 0$ for every prime $q \neq p$. So, under the  identification between $\Z^{(\simpleab)}$ and $\Q_{> 0}^\times$ via $\pi$, one has $d^\Gamma(D) = |\det(D)|$. It  follows that $d^\Gamma$ and $ |\det|$ coincide on $\SL(n,\Q)$  and on all diagonal matrices. Therefore they coincide on the entire $\GL(n,\Q)$. So $d^\Gamma =  |\det|$.

\subsubsection{$ \Gamma = \PSL(n,\Z)$, $n \geq 3$} As follows from Mostow rigidity and \cite[Theorem 2]{Borel-density}, the abstract commensurator of $\Gamma$ is $\Comm(\Gamma) \simeq \PGL(n,\Q) \rtimes C_2$, where $C_2$ acts via transpose-inverse. The determinant induces a surjective homomorphism $\PGL(n,\Q) \to \Q^\times / (\Q^\times)^n$, and $\Q^\times / (\Q^\times)^n$ is the abelianization of $\PGL(n,\Q)$. So every abelian quotient of $\PGL(n,\Q)$ is torsion, and similarly for $\Comm(\Gamma)$. Since $d^\Gamma$ always takes values in a free abelian group, $d^\Gamma$ is trivial here. What happens for $n=2$, which corresponds to the case of a free group since $\PSL(2,\Z)$ is virtually free, is the subject of the next section.

\section{On the abstract commensurator of a finitely generated free group} \label{sec-comm-free}

In this section we consider the situation of a finitely generated free group of rank at least two, denoted by $F$.

\subsection{The image of $d^F : \Comm(F) \to \Z^{(\simple)}$}

\begin{Proposition} \label{prop-image-d^F}
	The image of $d^F : \Comm(F) \to \Z^{(\simple)}$ is \[ d^F (\Comm(F)) = \left\lbrace (x_S)_\simple \in \Z^{(\simple)} \, : \, \prod_\simple  |S|^{x_S}= 1 \right\rbrace. \] In particular it is a free abelian group of infinite rank.
\end{Proposition}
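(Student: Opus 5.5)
The inclusion $\subseteq$ follows from Lemma \ref{lem-im-contained-ker-same-index}. In $F$, two isomorphic finite index subgroups have the same index: by the Nielsen--Schreier formula a subgroup of index $k$ in $F$ (of rank $r$) is free of rank $k(r-1)+1$, and since $r \geq 2$ the rank determines $k$. Hence $d^F(\Comm(F)) \subseteq \ker(\pi)$.

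For the reverse inclusion we use Lemma \ref{lem-generation-kernel-abelian}(2). It suffices to show that each $f_S$ lies in the image, with $S$ a non-abelian finite simple group. We build a virtual isomorphism $\varphi : A \to B$ with $A, B$ normal in $F$, $F/A \simeq S$, and $F/B$ solvable of order $|S|$. Then $n^F(A) = \delta_S$, and $n^F(B)$ records the prime factors of $|S|$ with multiplicity, so $d^F([\varphi]) = g_{|S|} - \delta_S = -f_S$; its inverse gives $f_S$. Concretely, take $B$ to be the kernel of a surjection of $F$ onto the cyclic group $C_{|S|}$. This surjection exists because $F$ has rank at least one. A composition series from $B$ to $F$ then has factors $C_p$, with $p$ running over the prime factors of $|S|$ counted with multiplicity, so $n^F(B) = g_{|S|}$.

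For $A$ we need a normal subgroup with quotient $S$. Every finite simple group is $2$-generated, so if the rank of $F$ is at least $2$ there is a surjection $F \to S$, and we take $A$ to be its kernel. Since $(F:A) = (F:B) = |S|$, the Nielsen--Schreier formula shows $A$ and $B$ are free of the same finite rank. Any isomorphism $\varphi : A \to B$ then defines an element of $\Comm(F)$, and $A, B$ are normal, hence $F$-subnormal. If one prefers not to invoke $2$-generation, one can pass to a finite index subgroup of large rank and use Proposition \ref{prop-compatibility-hom-finite-index}, which says that $d$ is insensitive to replacing $F$ by a finite index subgroup.

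Finally, $\ker(\pi)$ is free abelian of infinite rank. It is a subgroup of the free abelian group $\Z^{(\simple)}$, so it is free abelian. The elements $f_S$ are linearly independent, since $f_S$ has $\delta_S$-coordinate $1$ and $\delta_{S'}$-coordinate $0$ for non-abelian $S' \neq S$, and there are infinitely many non-abelian finite simple groups, for instance the alternating groups $A_n$ with $n \geq 5$. So the image has infinite rank. The only point needing care is the existence of the surjection $F \to S$, which the $2$-generation of finite simple groups handles, or the finite-index reduction avoids; everything else is bookkeeping with Lemma \ref{lem-chasles}.
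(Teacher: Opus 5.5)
Your proof is correct and follows the paper's argument: the inclusion $\subseteq$ comes from Nielsen--Schreier together with Lemma \ref{lem-im-contained-ker-same-index}, and for each non-abelian $S$ an isomorphism between the kernels of $F \to S$ and $F \to C_{|S|}$ hits $\pm f_S$, after which Lemma \ref{lem-generation-kernel-abelian} finishes the argument. The differences are minor: you obtain $-f_S$ and invert, and you spell out the infinite-rank claim. Your remark that passing to a finite index subgroup of large rank via Proposition \ref{prop-compatibility-hom-finite-index} avoids the classification-dependent $2$-generation of finite simple groups is a genuine small improvement.
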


\begin{proof}
	Using previous notation, the statement is that the image of $d^F : \Comm(F) \to \Z^{(\simple)}$ is exactly the kernel of the homomorphism $\pi: \Z^{(\simple)} \to \Q^\times_{>0}$, $(x_S)_\simple \mapsto \prod_\simple  |S|^{x_S}$. By  the Nielsen-Schreier formula, two finite index subgroups of $F$ are isomorphic if and only if they have the same index. Hence by Lemma  \ref{lem-im-contained-ker-same-index}  the image of $d^F$ is contained in $\ker(\pi)$. Let $S$ be a non-abelian finite simple group, and let $n$ be the cardinality of $S$. It is known that every finite simple group is generated by two elements. Hence one can find a surjective homomorphism $F \to S$. Let $B$ be the kernel of this homomorphism. Choose also a surjective homomorphism $F \to C_n$ from $F$ to the cyclic group of order $n$, and denote by $A$ its kernel. Since $A,B$ have the same index in $F$, one can choose an isomorphism $\varphi: A \to B$. Let $c$ be the corresponding element of $\Comm(F)$. Using notation from Definition \ref{def-notation-ker-pi}, by construction of $c$ one has $d^F(c) = \delta_S - g_{|S|} = f_S$. Lemma \ref{lem-generation-kernel-abelian} then ensures the image of $d^F$ equals the entire $\ker(\pi)$.
\end{proof}

\subsection{The kernel of $d^F : \Comm(F) \to \Z^{(\simple)}$}

The goal of this subsection is to prove Theorem  \ref{thm-ker-CommF}. For, the main intermediate result is Theorem \ref{thm-caract-conj-AComm}, and its proof is the core of this subsection.

\begin{Definition}
For a finite group $G$, we denote by $ \mathcal{K}(F; G)$ the set of normal subgroups $N$ of $F$ such that $F/N \simeq G$. 
\end{Definition}

We recall the following interpretation of $\mathcal{K}(F; G)$. Let $E(F; G)$ denote the collection of epimorphisms  $F \to G$. The group $\Aut(G)$ acts on $E(F; G)$ by post-composition, and the group $\Aut(F)$ acts on $E(F; G)$ by pre-composition. These actions commute. The map  $E(F; G) \to \mathcal{K}(F; G)$ that associates to an epimorphism $\pi: F \to G$ its kernel $\ker(\pi)$ is surjective, and factors through a map $E(F; G) / \Aut(G) \to \mathcal{K}(F;G)$. Moreover any two epimorphisms $F \to G$ have the same kernel only if they belong to the same $\Aut(G)$-orbit, so  the map $E(F; G) / \Aut(G) \to \mathcal{K}(F; G)$ is bijective. Note that this map is $\Aut(F)$-equivariant. 

If $G$ is a finite group, we denote by $d(G)$ the minimal number of generators of $G$. We also write $\mu(G) = \max_\Sigma |\Sigma|$, where the maximum is taken over all minimal generating subsets of $G$. We will use the following result, which is \cite[Proposition 3.1]{Lubotzky-survey-AutF-representations} (see also \cite[Theorem 3]{Gilman-77-AutF}). 

\begin{Proposition} \label{prop-transitive-AutF-large-rank}
Let $G$ be a finite group. If the rank of $F$ is larger than $d(G) + \mu(G)$, then the $\Aut(F)$-action on $E(F; G)$ is transitive. In particular the $\Aut(F)$-action on $\mathcal{K}(F; G)$ is transitive.
\end{Proposition}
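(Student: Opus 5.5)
The plan is to prove the transitivity on $E(F;G)$ by Nielsen moves, and then deduce the statement about $\mathcal{K}(F;G)$ from the $\Aut(F)$-equivariant surjection $E(F;G)\to\mathcal{K}(F;G)$ recalled above. The second assertion is immediate once the first is known, since an equivariant surjection carries transitive actions to transitive actions. So the whole effort goes into showing that any two epimorphisms $\pi,\pi' : F\to G$ lie in the same $\Aut(F)$-orbit. Let $n$ be the rank of $F$ and fix a free basis $x_1,\dots,x_n$. An epimorphism $F\to G$ is then the same thing as a generating $n$-tuple $(g_1,\dots,g_n)$ of $G$. Precomposition by the elementary Nielsen automorphisms of $F$ realises, on tuples, the following moves:
\begin{itemize}
\item permuting the entries;
\item inverting an entry;
\item replacing $g_i$ by $g_ig_j^{\pm1}$ or $g_j^{\pm1}g_i$ for $j\neq i$.
\end{itemize}
It therefore suffices to show that any two generating $n$-tuples are Nielsen equivalent when $n> d(G)+\mu(G)$.

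The first step is a normal form. I fix once and for all a generating set $h_1,\dots,h_d$ of $G$ with $d=d(G)$. I will show that every generating $n$-tuple is Nielsen equivalent to $(h_1,\dots,h_d,1,\dots,1)$. Start from a generating tuple $(g_1,\dots,g_n)$ and extract a minimal generating subfamily; it has size at most $\mu(G)$. After permuting, assume $g_1,\dots,g_m$ generate $G$ with $m\le\mu(G)$. Every $g_j$ with $j>m$ is a word in $g_1,\dots,g_m$, so repeated transvections $g_j\mapsto g_jg_i^{\pm1}$ with $i\le m$ turn each such $g_j$ into $1$. We now hold $(g_1,\dots,g_m,1,\dots,1)$ with at least $n-m> d$ trivial entries.

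The second step is to rebuild the $h$'s in the trivial slots. Each $h_k$ is a word in $g_1,\dots,g_m$. Multiplying a trivial entry successively by the letters of that word, using only moves of the form $g_j\mapsto g_jg_i^{\pm1}$ with $i\le m$, turns it into $h_k$. Doing this in $d$ distinct trivial slots produces $(g_1,\dots,g_m,h_1,\dots,h_d,1,\dots,1)$. Now $h_1,\dots,h_d$ generate $G$, so the same killing procedure, this time multiplying by words in the $h$'s, turns each $g_i$ into $1$. A final permutation yields the normal form.

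Since the normal form is reached from every generating tuple, transitivity follows. The main point to check is the counting: we need $m+d<n$ free slots at the stage where both families coexist. This is exactly guaranteed by $n>d(G)+\mu(G)$ (even $n\ge d+\mu$ would do). The only other subtlety is justifying that the extracted minimal generating subfamily has size at most $\mu(G)$, which is the definition of $\mu$.
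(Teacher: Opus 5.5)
Your Nielsen-move argument is correct. You clear the entries outside an irredundant generating subfamily of size $m\le\mu(G)$, install a fixed $d(G)$-generating tuple in the freed slots, and then clear the original generators; this needs only $n\ge m+d(G)$, so your remark that $n\ge d(G)+\mu(G)$ already suffices is right. The paper gives no proof of its own; it cites \cite[Proposition 3.1]{Lubotzky-survey-AutF-representations} and \cite[Theorem 3]{Gilman-77-AutF}, where this same standard argument is used, so your proposal follows the intended route.
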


It will be convenient to use the following notation: 

\begin{Notation}
	If $S \in \simple$ and if $A$ is a subgroup of $B$, we write $A \lhd_{S} B $ if $A$ is normal in $B$ and $B/A \simeq S$.
\end{Notation}

\begin{Lemma} \label{lem-switch-comp-factors}
	Let $S_1, S_2 \in \simple$ such that $S_1,S_2$ are not isomorphic. Suppose $A_1,A_2$ are subgroups of $F$ such that $A_2 \lhd_{S_1} A_1 \lhd_{S_2} F$. Then there are subgroups $B_1,B_2$ of $F$ such that : \begin{enumerate}
		\item $B_2 \lhd_{S_1} A_1 \lhd_{S_2} F$; 
		\item $B_2 \lhd_{S_2} B_1 \lhd_{S_1} F$. 
	\end{enumerate}
\end{Lemma}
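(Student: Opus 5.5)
The plan is to ignore $A_2$ altogether and produce $B_1,B_2$ directly. I take $B_1$ to be the kernel of an epimorphism $F \to S_1$, and $B_2 := A_1 \cap B_1$. Such an epimorphism exists: every finite simple group is generated by two elements (as already used in the proof of Proposition \ref{prop-image-d^F}), and $F$ has rank at least two. Then $B_1 \lhd_{S_1} F$ holds by construction. It remains to check $A_1 \cap B_1 \lhd_{S_1} A_1$ and $A_1 \cap B_1 \lhd_{S_2} B_1$; both subgroups are normal, being intersections of normal subgroups of $F$.

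The key step is to show that $F/(A_1 \cap B_1) \simeq S_2 \times S_1$. The map $F \to F/A_1 \times F/B_1 \simeq S_2 \times S_1$ has kernel $A_1 \cap B_1$. Its image $H$ surjects onto both factors, so I apply the Goursat-type argument. The subgroup $H \cap (1 \times S_1)$ is normal in $H$. Since $H$ projects onto $S_1$, it is also normal in $S_1$, hence equals $1$ or $S_1$; the same holds for $H \cap (S_2 \times 1)$. If either intersection is the whole factor, then $H$, which surjects onto the other factor, is all of $S_2 \times S_1$. If both intersections are trivial, both projections restricted to $H$ are injective, so $H$ is the graph of an isomorphism $S_2 \to S_1$. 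This contradicts the assumption that $S_1$ and $S_2$ are not isomorphic. The argument does not need the $S_i$ to be non-abelian.

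Given $F/(A_1\cap B_1) \simeq S_2 \times S_1$, with $A_1$ and $B_1$ corresponding to the kernels of the projections onto $S_2$ and $S_1$, I read off the two quotients. We get $A_1/(A_1 \cap B_1) \simeq 1 \times S_1 \simeq S_1$ and $B_1/(A_1\cap B_1) \simeq S_2 \times 1 \simeq S_2$. This gives (1), using the hypothesis $A_1 \lhd_{S_2} F$, and also gives (2). The only real point is the Goursat step, which is where non-isomorphism of $S_1$ and $S_2$ is used. The existence of $B_1$ is routine.
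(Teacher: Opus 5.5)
Your proof is correct and essentially the paper's: the paper makes the same choices $B_1 = \ker(F \to S_1)$ and $B_2 = A_1 \cap B_1$, and it also never uses $A_2$. Where you use a Goursat-type argument to get $F/(A_1\cap B_1) \simeq S_2 \times S_1$, the paper notes that $A_1B_1 = F$ (by maximality of the normal subgroups $A_1, B_1$ of $F$ and $S_1 \not\simeq S_2$) and reads off both quotients from the second isomorphism theorem, which is the same content.
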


\begin{proof}
	Since $F$ is a free group, there exists a normal subgroup $B_1$ of $F$ such that $F/B_1 \simeq {S_1}$. Consider the normal subgroup $A_1B_1$ of $F$. Since $A_1$ is a maximal normal subgroup of $F$, we have $A_1B_1 = F$ or $A_1B_1 = A_1$. Assume the second possibility holds.  This means $B_1 \leq A_1$. Since $B_1$ is a maximal normal subgroup of $F$, we infer $A_1 = B_1$, which is impossible since we assume $S_1,S_2$ are not isomorphic. Hence $A_1B_1 = F$. Set $B_2 = A_1 \cap B_1$. Equality $A_1B_1 = F$ implies $A_1 / B_2 \simeq S_1$ and  $B_1 / B_2 \simeq S_2$. Hence $B_1,B_2$ satisfy the requirements. 
\end{proof}

If $A$ is a finite index subgroup of $F$, the natural homomorphism $a_A: \Aut(A) \to \Comm(F)$ is injective. For simplicity we will write $\Aut(A)$ instead of $a_A(\Aut(A))$ for the image of $\Aut(A)$ in $ \Comm(F)$. Also we will identify $A$ with the subgroup of $\Aut(A)$ consisting of inner automorphisms. In particular we will always view $A$ as a subgroup of $\Comm(F)$. The subgroup $\Aut(A)$ is exactly the normalizer of $A$ in $\Comm(F)$.


\begin{Theorem} \label{thm-caract-conj-AComm}
Suppose $A,B$ are finite index subnormal subgroups of $F$. The following are equivalent: \begin{enumerate}
		\item \label{item-A-B-conj} $A,B$ are conjugate in $\AComm(F)$;
		\item \label{item-A-B-sames-factors} $n^F_S(A) = n^F_S(B)$ for every $S \in \simple$. 
	\end{enumerate}
\end{Theorem}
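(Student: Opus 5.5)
The plan is to prove the two implications separately. The direction (\ref{item-A-B-conj}) $\Rightarrow$ (\ref{item-A-B-sames-factors}) should be essentially formal. Suppose $c \in \AComm(F)$ satisfies $cAc^{-1} = B$ in $\Comm(F)$. Then the conjugation by $c$ is realized by a virtual isomorphism $\varphi$ representing $c$, and after restricting $\varphi$ we get an isomorphism $\varphi: A' \to B'$ from a finite index subgroup $A'$ of $A$ onto a finite index subgroup $B'$ of $B$. We may choose $A'$ normal in $A$; since $\varphi$ conjugates $A$ to $B$ inside $\Comm(F)$, it extends to an isomorphism $A \to B$ on a suitable finite index level. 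More concretely, $c a c^{-1} \in B$ for $a \in A$, and this defines an isomorphism $\psi : A \to B$ with $[\psi] = c$. This identification uses that $F$ is free, so the centralizer of a finite index subgroup in $\Comm(F)$ is trivial. Now $\psi$ is an $F$-subnormal representative of $c$. Proposition \ref{prop-AComm-contained-kernel} gives $d^F(c)=0$, so $n_S^F(B)-n_S^F(A)=0$ for all $S$.

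For (\ref{item-A-B-sames-factors}) $\Rightarrow$ (\ref{item-A-B-conj}), I will induct on the common composition length $\ell = \ell^F(A) = \ell^F(B)$. Choose composition series $A = A_\ell \lhd \cdots \lhd A_0 = F$ and $B = B_\ell \lhd \cdots \lhd B_0 = F$. The factor sequences are permutations of each other as multisets. The first reduction is to make the two sequences of factors identical, in the same order. Lemma \ref{lem-switch-comp-factors} lets us swap two adjacent non-isomorphic factors at the top. Applying it inside the free group $A_{i}$ in place of $F$ (each $A_i$ is free of finite rank), we can replace $A_{i+1}$ while keeping $A_{i+2}$ fixed, swapping the factors at positions $i, i+1$. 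By bubble sort we then obtain a new series for $A$ whose factor sequence agrees with that of $B$.

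The main step is to show the following: if $A = A_\ell \lhd_{S_\ell} \cdots \lhd_{S_1} F$ and $B = B_\ell \lhd_{S_\ell} \cdots \lhd_{S_1} F$ have the same ordered factors, then $A$ and $B$ are conjugate in $\AComm(F)$. I would proceed inductively. The subgroups $A_1$ and $B_1$ both lie in $\mathcal{K}(F;S_1)$. If $\Aut(F)$ acts transitively there, some $\alpha \in \Aut(F)$ sends $A_1$ to $B_1$. Then $\alpha(A)$ and $B$ are subnormal in $B_1$ with the same ordered factors, so induction (applied to $B_1$, whose $\Comm$ and $\AComm$ coincide with those of $F$) finishes.

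The obstacle is that transitivity on $\mathcal{K}(F;S_1)$ is only known when the rank is large, via Proposition \ref{prop-transitive-AutF-large-rank}. To fix this, I will first pass to a deep finite index normal subgroup $N$ of $F$ of large rank, chosen inside $A\cap B$. For instance, $N$ can be a characteristic subgroup contained in $A \cap B$, for which $n^F_S(N)$ is known. Write $A \geq N$ and $B \geq N$, and replace the problem by comparing the series of $A$ and $B$ ``from below''. The idea is to refine so that the relevant top steps happen inside subgroups of rank exceeding $d(S)+\mu(S)$ for all the (finitely many) $S$ involved. Alternatively, I would insert abelian steps: conjugate within $F$ by first going down a $C_p$-step from $F$ to a large-rank free subgroup. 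Since all finite index subgroups of equal index are isomorphic and virtual isomorphisms between them need not lie in $\AComm$, care is needed. I expect this rank-enlarging bookkeeping to be the main difficulty, and would try to arrange it by reordering factors (via Lemma \ref{lem-switch-comp-factors}) so that a common large-rank subgroup $A_k=B_k$ is reached after a few steps, applying transitivity thereafter.
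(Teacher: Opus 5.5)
Your argument for (1)$\Rightarrow$(2) is fine and matches the paper. The direction (2)$\Rightarrow$(1) has a genuine gap, located exactly where you flag the difficulty.

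\textbf{The length-one case is not proved.} Everything reduces to showing that if $A,B\lhd_S F'$, for $F'$ any finite index subgroup of $F$ of possibly small rank, then $A,B$ are conjugate in $\AComm(F)$. Your plan covers this only when $\Aut(F')$ is transitive on $\mathcal{K}(F';S)$, i.e.\ in large rank, and the suggested remedies do not supply it.
\begin{enumerate}
\item Reordering factors is vacuous when $\ell=1$ and $S$ is non-abelian, and that case already contains the whole difficulty.
\item Passing to a deep characteristic $N\le A\cap B$ does not help. Automorphisms of $N$ do not act on the overgroups $A,B$ of $N$: conjugating $A$ by $\alpha\in\Aut(N)$ inside $\Comm(F)$ in general does not even give a subgroup of $F$.
\item The problem is not confined to the top step. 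After moving into $B_1$, the rank of $B_1$ need not exceed $d(S_2)+\mu(S_2)$, and similarly further down.
\end{enumerate}

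\textbf{The missing idea} is to descend along an isomorphism rather than along fixed subgroups. Since $A,B$ have the same index, pick any isomorphism $\varphi:A\to B$; it suffices to show $[\varphi]\in\AComm(F)$.
\begin{enumerate}
\item If $\varphi(C)=C$ for some finite index $C$, then $[\varphi]\in\Aut(C)\le\AComm(F)$.
\item Otherwise set $B_0=F$, $A_1=A$, $B_1=B$, and inductively $A_{n+1}=A_n\cap B_n$ and $B_{n+1}=\varphi(A_{n+1})$.
\item Then $A_n\neq B_n=\varphi(A_n)$ are both normal in $B_{n-1}$ with simple quotient, so $A_nB_n=B_{n-1}$. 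This gives $A_{n+1},B_{n+1}\in\mathcal{K}(B_n;S)$, with $\varphi$ restricting to an isomorphism $A_{n+1}\to B_{n+1}$.
\item The rank of $B_n$ tends to infinity. For $n$ large, Proposition~\ref{prop-transitive-AutF-large-rank} inside $B_n$ gives $\alpha\in\Aut(B_n)$ with $\alpha(B_{n+1})=A_{n+1}$.
\item Hence $\alpha[\varphi]\in\Aut(A_{n+1})$, and so $[\varphi]\in\AComm(F)$.
\end{enumerate}

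\textbf{Your reordering step is also wrong as stated.} Lemma~\ref{lem-switch-comp-factors} does not keep $A_{i+2}$ fixed; it replaces it by a new subgroup $B_{i+2}=A_{i+1}\cap B_{i+1}$. In general no series from the same $A$ with the swapped order exists. Suppose $A_{i+2}\lhd_S A_{i+1}\lhd_{S'}A_i$ and $A_{i+2}$ is not normal in $A_i$. Any normal subgroup of $A_i$ containing $A_{i+2}$ contains its normal closure in $A_i$, which equals $A_{i+1}$. So there is no $B_{i+1}$ with $A_{i+2}\lhd_{S'}B_{i+1}\lhd_S A_i$.

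Consequently, two subgroups with the same factors may admit no series in a common order, and ``bubble sort on a fixed $A$'' fails. Each swap must be paired with replacing $A$ by an $\AComm(F)$-conjugate: use the length-one case inside $A_{i+1}$ to conjugate $A_{i+2}$ onto $B_{i+2}$. This is how the paper moves a factor $S$ to the top, then makes $A_1=B_1$ by the length-one case, and inducts inside $A_1$.
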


\begin{proof}
	We start with (\ref{item-A-B-conj}) $\implies$ (\ref{item-A-B-sames-factors}). Take $c \in \AComm(F)$ such that $B = cA c^{-1}$. Then $\varphi: A \to B$, $a \mapsto c a c^{-1}$, is a representative of $c$. Since $A,B$ are subnormal in $F$, it is a subnormal representative. We know from Proposition  \ref{prop-AComm-contained-kernel} that $c$ belongs to the kernel of $d^F : \Comm(F) \to \Z^{(\simple)}$. Therefore $n^F_S(A) = n^F_S(B)$ for every $S \in \simple$, as desired. 
	
(\ref{item-A-B-sames-factors})  $\implies$ (\ref{item-A-B-conj}) is the more difficult part. We break the proof into several steps. 
	
\begin{Step} \label{step-length-one-case}
	(\ref{item-A-B-sames-factors})  $\implies$ (\ref{item-A-B-conj}) is true when $A,B$ verify $\ell^F(A) = \ell^F(B) = 1$. 
\end{Step}

\begin{proof}
Given $S \in \simple$ and $A,B$ two subgroups of $F$ with $A \lhd_{S} F$ and $B \lhd_{S} F$, we want to show that $A,B$ are conjugate in $\AComm(F)$. Since $A,B$ have the same index in $F$, they are free groups of the same rank. Let $\varphi: A \to B$ be an isomorphism. We have $[\varphi] A [\varphi]^{-1} = B$.   We show that $[\varphi]$ belongs to $\AComm(F)$.
 
  If there exists a finite index subgroup $C$ of $A$ such that $C = \varphi(C)$, then $[\varphi]$ belongs to $\Aut(C)$ in $\Comm(F)$. Since $\Aut(C)$ is contained in $\AComm(F)$, in that case we are done. Henceforth we assume that there is no  finite index subgroup $C$ of $A$ such that $C = \varphi(C)$. We define inductively  decreasing sequences $(A_n)_{n \geq 1}, (B_n)_{n \geq 0}$ of finite index subgroups of $F$ with the following properties: \begin{itemize}
  	\item $A_{n+1} \lhd_{S} B_n$ and $B_{n+1} \lhd_{S} B_n$; 
  	\item the restriction of $\varphi$ to $A_{n+1}$ induces an isomorphism $A_{n+1} \to B_{n+1}$.
  \end{itemize} 
By assumption $B_0 = F$, $A_1 = A$ and $B_1 = B$ satisfy the required properties. Suppose $A_1, \ldots, A_n$ and $B_0, \ldots, B_n$ have already been defined. The subgroups $A_n$ and $B_n$ are maximal normal subgroups of $B_{n-1}$. Hence if $A_n B_n \neq B_{n-1}$ then $A_n = B_n$. Since the restriction of $\varphi$ to $A_{n}$ induces an isomorphism $A_{n} \to B_{n}$, this contradicts the present assumption that there is no finite index subgroup $C$ such that $C = \varphi(C)$. Hence we must have $A_n B_n = B_{n-1}$. We set $A_{n+1} = A_n \cap B_n$ and $B_{n+1} = \varphi(A_{n+1})$. Equality $A_n B_n = B_{n-1}$ ensures $B_{n-1} / B_n \simeq A_n / A_{n+1}$. Therefore $A_n / A_{n+1} \simeq S$. Since $B_{n+1} = \varphi(A_{n+1})$ and $B_{n} = \varphi(A_{n})$, we infer $B_{n+1}$ is normal in $B_n$ and  $B_n / B_{n+1} \simeq A_n / A_{n+1}  \simeq S$. On the other hand the equality $A_n B_n = B_{n-1}$ also ensures $B_{n-1} / A_n \simeq  B_{n} / A_{n+1}$, and therefore $B_{n} / A_{n+1} \simeq S$. Therefore $A_{n+1}, B_{n+1}$ satisfy the required properties. 

Since the sequence $(B_n)$ is properly decreasing, one can find $n$ such that the rank of $B_n$ is larger than the integer appearing in the hypothesis of Proposition  \ref{prop-transitive-AutF-large-rank}. Since $A_{n+1}, B_{n+1} \in \mathcal{K}(B_n; S)$, by the proposition one can find $\alpha \in \Aut(B_n)$ such that $\alpha(B_{n+1}) = A_{n+1}$. Since the restriction of $\varphi$ to $A_{n+1}$ induces an isomorphism $A_{n+1} \to B_{n+1}$, we have $\alpha [\varphi] \in \Aut(A_{n+1})$. Since $\Aut(B_n)$ and $\Aut(A_{n+1})$ are contained in $\AComm(F)$, this shows $[\varphi] \in \AComm(F)$, as desired. 
\end{proof}

\begin{Step} \label{step-move-position-one}
Let $S \in \simple$, and let $A$ be a finite index subnormal subgroup of $F$ such that $n^F_S(A) > 0$. Then there exists a finite index subnormal subgroup $B$ of $F$ such that: \begin{enumerate}
	\item $A$ and $B$ are conjugate in $\AComm(F)$; 
	\item there exists a composition series $(B_\ell, \ldots, B_0)$  going from $B$ to $F =B_0$ such that $B_1 \lhd_{S} F$.
\end{enumerate}
\end{Step}

\begin{proof}
Let   $\A = (A_\ell , \ldots, A_0)$ be a composition series going from $A$ to $\Gamma$. By assumption one composition factor is isomorphic to $S$. Let $m(\A;S)$ be the smallest integer $i \geq 0$ such that $ A_{i+1}  \lhd_{S} A_i$. Let also $m(A;S) = \min_\A m(\A;S)$, where the minimum is taken over all composition series going from $A$ to $\Gamma$. We argue by induction on $m(A;S)$. 

The statement is true when $m(A;S)=0$, as we can just take $B=A$. Assume now $k \geq 0$ is such that the statement is true for every $A$ such that $m(A;S) \leq k$, and let $A$ such that $m(A;S) = k+1$. Let $\A = (A_\ell , \ldots, A_0)$ be a composition series going from $A$ to $\Gamma$ such that $A_{k+2} \lhd_{S} A_{k+1}$ and $A_i / A_{i+1}$ is not isomorphic to $S$ for $i \leq k$. In particular $A_k / A_{k+1}$ is not isomorphic to $S$. So we have \[A_{k+2} \lhd_{S} A_{k+1} \lhd_{S'} A_{k} \] with $S' \neq S$. We apply Lemma \ref{lem-switch-comp-factors} (inside the finitely generated free group $A_{k}$). We deduce there exist $B_{k+2}, B_{k+1}$ such that $B_{k+2} \lhd_{S} A_{k+1} \lhd_{S'} A_{k}$ and $B_{k+2} \lhd_{S'} B_{k+1} \lhd_{S} A_{k}$. Since $A_{k+2} \lhd_{S} A_{k+1}$ and $B_{k+2} \lhd_{S} A_{k+1}$, by Step \ref{step-length-one-case} above (applied inside $A_{k+1}$) we deduce that $A_{k+2}$ and $ B_{k+2}$ are conjugate in $\AComm(A_{k+1})$. Now $A_{k+1}$ is a finite index subgroup of $F$, so $\AComm(A_{k+1}) = \AComm(F)$. So we can find $c \in  \AComm(F)$ such that  $cA_{k+2}c^{-1} = B_{k+2}$. Set $A' := cAc^{-1}$. We therefore have a composition series $\A' = (cA_\ell c^{-1}, \ldots, cA_{k+2}c^{-1}, B_{k+1}, A_{k}, \ldots, A_0)$ going from $A'$ to $\Gamma$. By construction we have $m(\A;S) = k$. In particular  $m(A';S) \leq k$. By the induction hypothesis applied to $A'$ and since $A$ and $A'$ are conjugate in $\AComm(F)$, this terminates the proof.
\end{proof}

\begin{Step}
Conclusion of the proof of (\ref{item-A-B-sames-factors})  $\implies$ (\ref{item-A-B-conj}) in Theorem \ref{thm-caract-conj-AComm}. 
\end{Step}

\noindent
 We take $A,B$ finite index subnormal subgroups of $F$ with $n^F_S(A) = n^F_S(B)$ for every $S \in \simple$. We wish to show $A,B$ are conjugate in $\AComm(F)$. We argue by induction on the composition length $\ell^F(A) = \ell^F(B)$. The case $\ell^F(A) = 1$ is exactly the content of Step \ref{step-length-one-case}. Now assume $\ell^F(A) = \ell + 1$ for some $\ell \geq 1$. Pick a composition series $(B_\ell, \ldots, B_0)$  going from $B$ to $F$, and let $S \in \simple$ such that $B_1 \lhd_{S} B_0$. By the assumption on $A,B$, we have $n^F_S(A) > 0$. We apply Step \ref{step-move-position-one}. By the conclusion of  Step \ref{step-move-position-one} and by the implication (\ref{item-A-B-conj}) $\implies$ (\ref{item-A-B-sames-factors}) of Theorem \ref{thm-caract-conj-AComm}, we deduce that upon conjugating $A$ by an element of $\AComm(F)$, we can assume that there exists a composition series $(A_\ell, \ldots, A_0)$  going from $A$ to $F$ such that $A_1 \lhd_{S} A_0$. Since $B_1 \lhd_{S} F$ and $A_1 \lhd_{S} F$, by Step \ref{step-length-one-case} $A_1$ and $B_1$ are conjugate in $\AComm(F)$. Hence upon conjugating we are reduced to the situation where $A_1 = B_1$. We have $n^{A_1}_{S}(A) = n^F_{S}(A) - 1$ and $n^{A_1}_{S'}(A) = n^F_{S'}(A)$ for every $S'$ different from $S$. And similarly for $B$. Therefore $n^{A_1}_{S'}(A) = n^{A_1}_{S'}(B)$ for every $S' \in \simple$. Since $\ell^{A_1}(A) =  \ell^F(A) - 1$, we can apply the induction hypothesis to the subgroups $A,B$ inside the ambient group $A_1$. This terminates the proof. 
\end{proof}

\begin{Theorem} \label{thm-ker-CommF}
	The kernel of $d^F : \Comm(F) \to \Z^{(\simple)}$ is the subgroup $\AComm(F)$ of $\Comm(F)$.
\end{Theorem}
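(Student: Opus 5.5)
One inclusion is already available: Proposition \ref{prop-AComm-contained-kernel} gives $\AComm(F) \leq \ker(d^F)$. The plan is to prove the reverse inclusion by showing that every $c \in \ker(d^F)$ lies in $\AComm(F)$.

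First, by Lemma \ref{lem-existence-subnormal-rep}, we choose an $F$-subnormal representative $\varphi : A \to B$ of $c$. Since $d^F(c) = 0$, we have $n^F_S(A) = n^F_S(B)$ for every $S \in \simple$.

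Next, Theorem \ref{thm-caract-conj-AComm} (implication (\ref{item-A-B-sames-factors}) $\implies$ (\ref{item-A-B-conj})) provides $c' \in \AComm(F)$ with $c' A c'^{-1} = B$. Here $A$ and $B$ are viewed as subgroups of $\Comm(F)$ via their inner automorphisms.

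On the other hand, $c$ itself conjugates $A$ onto $B$ inside $\Comm(F)$. Indeed, for $a \in A$ we have $[\varphi]\,[\mathrm{ad}_a]\,[\varphi]^{-1} = [\mathrm{ad}_{\varphi(a)}]$, which holds at the level of virtual isomorphisms on a common finite index subgroup. So $c A c^{-1} = B$ as well.

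It follows that $c'^{-1} c$ normalizes $A$ in $\Comm(F)$. As recorded just before Theorem \ref{thm-caract-conj-AComm}, the normalizer of $A$ in $\Comm(F)$ is exactly $\Aut(A)$. This uses the injectivity of $a_A$, and the fact that an element normalizing $A$ induces an automorphism of $A$ agreeing with it on a finite index subgroup. The latter holds because free groups have trivial centralizers of finite index subgroups, so conjugation determines the commensurator element. Hence $c'^{-1} c \in \Aut(A) \subseteq \AComm(F)$, and therefore $c = c' (c'^{-1} c) \in \AComm(F)$.

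The only potentially delicate point is the claim that the normalizer of $A$ is $\Aut(A)$. I will take it as given, since the paper asserts it. If a justification is needed, it goes as follows. Let $c$ normalize $A$ and let $\psi : A_0 \to B_0$ represent $c$, with $A_0 \leq A$ of finite index. Conjugation by $c$ gives $\alpha \in \Aut(A)$. Then $\alpha$ and $\psi$ agree on $A_0$: for $x \in A_0$ and all $a \in A_0 \cap \psi^{-1}(A_0)$ we have $\alpha(a) = \psi(a)$ up to conjugation by the same element, and in a free group two elements inducing the same conjugation on a finite index subgroup coincide. All the substantial work sits in Theorem \ref{thm-caract-conj-AComm}, so this final step is short.
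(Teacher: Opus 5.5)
Your proof is correct and follows the paper's argument: you take an $F$-subnormal representative $\varphi: A \to B$, use Theorem \ref{thm-caract-conj-AComm} to find $c' \in \AComm(F)$ with $c'Ac'^{-1} = B$, and conclude because $c'^{-1}c$ normalizes $A$ and the normalizer of $A$ in $\Comm(F)$ is $\Aut(A) \leq \AComm(F)$. Your side justification of that normalizer fact is loosely worded, but it rests on the right point: an element of $F$ centralizing a finite index subgroup is trivial, so $\alpha$ and $\psi$ agree on $A_0$. The paper simply asserts this normalizer fact without proof.
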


\begin{proof}
One inclusion is Proposition  \ref{prop-AComm-contained-kernel}. Conversely, take $c \in \ker(d^F)$, and let $\varphi: A \to B$ be a $F$-subnormal representative of $c$. That $c$ belongs to the kernel of $d^F$ means that $n^F_S(A) = n^F_S(B)$ for every $S \in \simple$. Therefore by Theorem \ref{thm-caract-conj-AComm} we infer that $A,B$ are conjugate in $\AComm(F)$. Since we also have $cAc^{-1} = B$, we deduce that there is $c' \in \AComm(F)$ such that $c' c  \in \Aut(A)$. Since $\Aut(A) \leq \AComm(F)$ we deduce that $c$ belongs to $\AComm(F)$, as desired. 
\end{proof}

\begin{Corollary} \label{cor-ker-CommF-monolith}
	The monolith of the group $\Comm(F)$ is equal to the kernel of $d^F : \Comm(F) \to \Z^{(\simple)}$. In particular every proper quotient of $\Comm(F)$ is abelian.
\end{Corollary}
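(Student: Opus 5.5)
This follows directly from Theorem \ref{thm-ker-CommF} combined with the result from \cite[Theorem 5.4]{BELBRVW-comm-free}, recalled in the introduction, that $\AComm(F)$ is the monolith of $\Comm(F)$. By Theorem \ref{thm-ker-CommF}, $\ker(d^F) = \AComm(F)$. By \cite[Theorem 5.4]{BELBRVW-comm-free}, $\Comm(F)$ is monolithic with monolith $\mathrm{Mon}(\Comm(F)) = \AComm(F)$. Putting these two equalities together gives $\mathrm{Mon}(\Comm(F)) = \ker(d^F)$, which is the first assertion.

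For the second assertion, let $N$ be a normal subgroup of $\Comm(F)$ with $\Comm(F)/N$ a proper quotient, meaning $N \neq 1$. Since the monolith is the intersection of all non-trivial normal subgroups, we have $\ker(d^F) = \mathrm{Mon}(\Comm(F)) \leq N$. Hence $\Comm(F)/N$ is a quotient of $\Comm(F)/\ker(d^F)$. The latter is isomorphic to the subgroup $d^F(\Comm(F))$ of $\Z^{(\simple)}$, and is therefore abelian (indeed free abelian, by Proposition \ref{prop-image-d^F}). A quotient of an abelian group is abelian, which concludes.

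There is no real obstacle here: all the substantive content is in Theorem \ref{thm-ker-CommF}, in particular the transitivity input from Proposition \ref{prop-transitive-AutF-large-rank} used in Theorem \ref{thm-caract-conj-AComm}. The only point to check is that we are entitled to cite \cite[Theorem 5.4]{BELBRVW-comm-free} for the statement that $\AComm(F)$ is the monolith, which the introduction explicitly does.

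If one wanted to avoid relying on the full strength of that citation, it would suffice to know two things. First, every non-trivial normal subgroup of $\Comm(F)$ meets $\AComm(F)$ non-trivially. Second, $\AComm(F)$ is simple. Together these force every non-trivial normal subgroup to contain $\AComm(F)$.
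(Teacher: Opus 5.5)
Your proposal is correct and follows the paper's own proof: it combines Theorem \ref{thm-ker-CommF} with \cite[Theorem 5.4]{BELBRVW-comm-free}. You also spell out the deduction of the second assertion, which the paper leaves implicit: every nontrivial normal subgroup contains $\ker(d^F)$, so every proper quotient factors through the abelian group $d^F(\Comm(F))$.
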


\begin{proof}
Follows from Theorem \ref{thm-ker-CommF} and Theorem 5.4 in \cite{BELBRVW-comm-free}, which asserts that $\AComm(F)$ is the monolith of the group $\Comm(F)$. 
\end{proof}

\subsection{On the abstract commensurator of surface groups} \label{subsec-comm-surface}

We end this section with a discussion that compares the situation where the free group $F$ is replaced by the fundamental group $\pi_1(\Sigma)$ of a closed surface $\Sigma$ of genus $\geq 2$. Since any two groups $\pi_1(\Sigma)$ are virtually isomorphic, the group $\Comm(\pi_1(\Sigma))$ does not depend on the genus. Like in the free group case, the group $\Comm(\pi_1(\Sigma))$ is known to be monolithic, with simple monolith \cite[Theorem A.2]{Cap-appendix-comm-tree}.

The arguments in the free group case in the proof of Proposition \ref{prop-image-d^F} apply equally to $d^{\pi_1(\Sigma)} : \Comm(\pi_1(\Sigma)) \to \Z^{(\simple)}$. Hence we have: 
%

\begin{Proposition} \label{prop-image-d-surface}
	Let $\pi_1(\Sigma)$ be the fundamental group of a closed surface of genus at least $2$. Then the image of $d^{\pi_1(\Sigma)} : \Comm(\pi_1(\Sigma)) \to \Z^{(\simple)}$ is \[ d^{\pi_1(\Sigma)} (\Comm(\pi_1(\Sigma))) = \left\lbrace (x_S)_\simple \in \Z^{(\simple)} \, : \, \prod_\simple  |S|^{x_S}= 1 \right\rbrace. \] In particular it is a free abelian group of infinite rank.
\end{Proposition}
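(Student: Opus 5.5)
We mirror the proof of Proposition \ref{prop-image-d^F}, writing $\Pi = \pi_1(\Sigma)$. There are two inclusions: the image of $d^{\Pi}$ lies in $\ker(\pi)$, and every generator $f_S$ of $\ker(\pi)$ from Lemma \ref{lem-generation-kernel-abelian} is attained.

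For the first inclusion we apply Lemma \ref{lem-im-contained-ker-same-index}. We must check that two isomorphic finite index subgroups of $\Pi$ have the same index. A subgroup of index $k$ in $\pi_1(\Sigma_g)$ is the fundamental group of a $k$-sheeted cover, which is a closed orientable surface of Euler characteristic $k(2-2g)$. Its abelianization has rank $2 - k(2-2g)$, so it determines $k$ because $g \geq 2$. Hence the index is an isomorphism invariant, and the image of $d^\Pi$ lies in $\ker(\pi)$.

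For the reverse inclusion, fix a non-abelian finite simple group $S$ of order $n$. Since $S$ is $2$-generated and $\Pi$ surjects onto the free group of rank $g \geq 2$ (by killing the $b_i$ in the standard presentation), there is an epimorphism $\Pi \to S$. Let $B$ be its kernel. Since $H_1(\Pi) \simeq \Z^{2g}$ surjects onto $C_n$, there is also an epimorphism $\Pi \to C_n$; let $A$ be its kernel. Then $A$ and $B$ both have index $n$, so both are fundamental groups of closed orientable surfaces of the same genus, and we choose an isomorphism $\varphi: A \to B$. Both subgroups are normal, hence the representative is $\Pi$-subnormal. A composition series from $B$ to $\Pi$ has the single factor $S$. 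For $A$, the quotient $\Pi/A \simeq C_n$ is cyclic, so a composition series from $A$ to $\Pi$ has factors $C_{p}$ with multiplicities given by the prime factorization of $n$. Therefore $d^\Pi([\varphi]) = \delta_S - g_{|S|} = f_S$, and Lemma \ref{lem-generation-kernel-abelian} gives equality with $\ker(\pi)$.

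It remains to show that $\ker(\pi)$ has infinite rank. The elements $f_S$, as $S$ runs over the infinitely many non-abelian finite simple groups (for example the $A_m$), are linearly independent because $f_S$ has coefficient $1$ on $\delta_S$. Moreover $\ker(\pi)$, being a subgroup of a free abelian group, is free abelian.

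The only point that differs from the free group case is the index invariance, and it is handled by the Euler characteristic argument above.
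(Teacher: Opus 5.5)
Your proposal is correct and is essentially the paper's argument. The paper simply notes that the proof of Proposition \ref{prop-image-d^F} carries over. You supply the two ingredients that actually need checking: index invariance via multiplicativity of the Euler characteristic (in place of the Nielsen--Schreier formula), and epimorphisms onto $S$ and $C_n$ obtained from the surjections of $\pi_1(\Sigma)$ onto a free group of rank $g$ and onto $\Z^{2g}$.
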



One can ask how much other results for $\Comm(F)$ hold for $\Comm(\pi_1(\Sigma))$. For instance we do not know whether the analogue of Corollary  \ref{cor-ker-CommF-monolith} is true, i.e.\ whether every proper quotient of $\Comm(\pi_1(\Sigma))$ is abelian. There seem to be two points that would need to be addressed to have a clean correspondence between the free group and the surface group situations. The first one is that while the monolith of $\Comm(F)$ is equal to $\AComm(F)$ \cite{BELBRVW-comm-free}, we do not know whether the same is true for $\Comm(\pi_1(\Sigma))$. The monolith is contained in $\AComm(\pi_1(\Sigma))$ as $\AComm(\pi_1(\Sigma))$ is a normal subgroup of $\Comm(\pi_1(\Sigma))$ \cite[Proposition 1.4.2]{Odden-thesis},  \cite[Lemma 2.22]{BELBRVW-comm-free}, but we do not know whether there is equality. The second one, more closely connected to the present work, is that we do not know whether the analogue of Theorem \ref{thm-ker-CommF} is true here, i.e.\ whether the kernel of $d^{\pi_1(\Sigma)} : \Comm(\pi_1(\Sigma)) \to \Z^{(\simple)}$ is the subgroup $\AComm(\pi_1(\Sigma))$. Imitating the above arguments does not seem to work immediately. Indeed, in that situation it is known that given a finite simple group $S$, there are obstructions to the transitivity of the $\Aut(\pi_1(\Sigma))$-action on the set of kernels of epimorphisms $\pi_1(\Sigma) \to S$. More precisely, to every  kernel of $\pi_1(\Sigma) \to S$ there is an associated class in $H_2(S) / \mathrm{Out}(S)$ that is $\Aut(\pi_1(\Sigma))$-invariant  (see for instance the discussion in \cite[\S 10]{Lubotzky-survey-AutF-representations}). Moreover, the $\Aut(\pi_1(\Sigma))$-action on the set of kernels of epimorphisms $\pi_1(\Sigma) \to S$ corresponding to a given $[c] \in H_2(S) / \mathrm{Out}(S)$ can also fail to be transitive \cite{Funar-Lochak}. On the positive side, let us mention however that the analogue of Proposition  \ref{prop-transitive-AutF-large-rank} for $S$ \textit{and} $[c] \in H_2(S) / \mathrm{Out}(S)$ fixed holds true here: the $\Aut(\pi_1(\Sigma))$-action on the set of kernels of epimorphisms $\pi_1(\Sigma) \to S$ corresponding to $[c]$ is transitive for large enough genus \cite{Dunfield-Thurston-random-3man}.

\section{The commensurator of a cocompact tree lattice} \label{sec-comm-tree}

\subsection{Preliminaries} \label{subsec-prelim-W-Cd}

Let $d \geq 3$, and $T_d$ the $d$-regular tree. Let $W$ be the free product of $d$ copies of the cyclic group of order $2$ (the free Coxeter group of rank $d$). We fix $a_1, \ldots, a_d \in W$ such that $a_i^2 = 1$ and $a_1, \ldots, a_d$ generate $W$. We systematically view $W$ as being equipped with this finite generating subset. We identify the tree $T_d$ with the Cayley graph of $W$.  Let $v_0$ be the vertex of $T_d$ corresponding to the identity element. There is a coloring of the edges of $T_d$ by the elements of $\left\lbrace 1, \ldots,d \right\rbrace $: an edge between two elements $\gamma$ and $\gamma a_i$ of $W$ has color $i$. The left action of $W$ on itself allows to see $W$ as a subgroup of $\Aut(T_d)$. It is a  cocompact lattice because $W$ acts freely and transitively on vertices of $T_d$.

\begin{Definition}
For $g \in \Aut(T_d)$ and $v$ a vertex of $T_d$, we denote by $\sigma(g,v)$ the permutation of $\left\lbrace 1, \ldots,d \right\rbrace $ defined by $g(va_i) = g(v) a_{\sigma(g,v)(i)}$. 
\end{Definition}

It is easily seen that every element $g \in \Aut(T_d)$ is characterized by the data of $g(v_0)$ and the collection of permutations $\left\lbrace \sigma(g,v) \right\rbrace$ where $v$ ranges over all vertices of $T_d$. Also we note that $W$ is precisely the subgroup of $\Aut(T_d)$ consisting of elements $g$ such that $ \sigma(g,v) = 1$ for every vertex $v$. 

\begin{Definition}
	We denote by $C_d$ the commensurator of $W$ in $\Aut(T_d)$.
\end{Definition}

 Any cocompact lattice  $\Gamma$ of $\Aut(T_d)$ admits a conjugate that is commensurable with $W$ in $\Aut(T_d)$, and hence has a commensurator in $\Aut(T_d)$ that is conjugate to $C_d$ \cite[4.15-4.17]{Bass-Kulk-90}. Hence it will be enough to prove Theorem  \ref{thm-intro-comme-tree-abelian-quotient} from the introduction for $\Gamma = W$. 

The following is Proposition 2.2 in \cite{LMZ-superrigidity}. 

\begin{Lemma} \label{lem-description-periodic}
Let $g \in \Aut(T_d)$ and $\gamma \in W$. Then $\gamma \in W \cap g^{-1} W g$ if and only if $\sigma(g,v) = \sigma(g,\gamma v)$ for every vertex $v$. 
\end{Lemma}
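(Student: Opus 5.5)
We prove both implications directly, using the characterization of $W$ as the set of elements of $\Aut(T_d)$ whose local permutations $\sigma(\cdot,v)$ are all trivial, together with the cocycle identity for $\sigma$. Throughout, vertices are elements of $W$, and $W$ acts by left multiplication: $\gamma(v) = \gamma v$.

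The first step is to record that $\sigma$ is multiplicative along orbits. For $g,h \in \Aut(T_d)$ and a vertex $v$ we have
\[ gh(va_i) = g(h(v)a_{\sigma(h,v)(i)}) = gh(v)\, a_{\sigma(g,h(v))(\sigma(h,v)(i))}, \]
so $\sigma(gh,v) = \sigma(g,h(v))\,\sigma(h,v)$. For $\gamma \in W$ we have $\sigma(\gamma,v)=1$ for every $v$. Applying the identity to $g\gamma$ and to $\gamma g$ gives
\[ \sigma(g\gamma,v) = \sigma(g,\gamma v), \qquad \sigma(\gamma g, v) = \sigma(g,v). \]

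Now fix $\gamma \in W$. By definition, $\gamma \in W \cap g^{-1}Wg$ if and only if $g\gamma g^{-1} \in W$. By the characterization of $W$, this holds if and only if $\sigma(g\gamma g^{-1},u) = 1$ for every vertex $u$.

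To compute this quantity, apply the cocycle identity to $g\gamma g^{-1} = (g\gamma)\cdot g^{-1}$:
\[ \sigma(g\gamma g^{-1},u) = \sigma(g\gamma, g^{-1}u)\,\sigma(g^{-1},u) = \sigma(g,\gamma g^{-1}u)\,\sigma(g^{-1},u). \]
Taking $h = g^{-1}$ in the cocycle identity gives $1 = \sigma(\mathrm{id},u) = \sigma(g,g^{-1}u)\,\sigma(g^{-1},u)$, so $\sigma(g^{-1},u) = \sigma(g,g^{-1}u)^{-1}$. Setting $v = g^{-1}u$, which ranges over all vertices as $u$ does, we obtain
\[ \sigma(g\gamma g^{-1}, g v) = \sigma(g,\gamma v)\,\sigma(g,v)^{-1}. \]
This vanishes identically in $v$ exactly when $\sigma(g,\gamma v) = \sigma(g,v)$ for all $v$, which is the claim.

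The only point requiring care is the bookkeeping of the order of composition in the cocycle identity, in particular which permutation acts first; the argument involves no real obstacle beyond that.
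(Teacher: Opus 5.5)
Your proof is correct. The paper gives no proof of its own here and simply quotes Lubotzky--Mozes--Zimmer (Proposition 2.2 there). Your cocycle identity, together with the characterization of $W$ as the automorphisms whose local permutations are all trivial (which the paper records just before the lemma), gives a complete self-contained argument.

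The formula $\sigma(g\gamma g^{-1},gv)=\sigma(g,\gamma v)\,\sigma(g,v)^{-1}$ formalizes a direct observation. The map $g\gamma g^{-1}$ sends the edge $[g(v),g(va_i)]$, of color $\sigma(g,v)(i)$, to the edge $[g(\gamma v),g(\gamma v a_i)]$, of color $\sigma(g,\gamma v)(i)$, and every edge of $T_d$ has the first form. So $g\gamma g^{-1}$ is color-preserving exactly when the local permutations agree along $\gamma$.

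One wording point only: the product should be described as ``the identity permutation'' rather than as ``vanishing''.
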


We will make use of the interpretation of elements of the stabilizer of $v_0$ in $C_d$ as recoloring of finite graphs from Section 2 in \cite{LMZ-superrigidity}, notably  Propositions 2.2--2.3-2.-4 there. However we will be working in a slightly different setting than \cite{LMZ-superrigidity}, as we want to allow an edge to be a loop in the finite graphs we consider. This requires slight modifications of the framework from \cite{LMZ-superrigidity}. For completeness we explain the details. 

Let $X$ be a finite connected $d$-regular graph (every vertex has $d$ adjacent edges). Edges are non-oriented. We allow an edge to be a loop, and we also allow multiple edges. We fix $x_0$ a base  vertex of $X$. By a coloring $c$ of $X$ we mean an assignment for each edge of $X$ of an element in $\left\lbrace 1, \ldots,d \right\rbrace $ (the color of that edge) such that for every vertex, the $d$ adjacent edges have different colors. If $c$ is a coloring of $X$, there exists a unique color preserving map $\pi: T_d \to (X,c)$ such that $\pi(v_0) = x_0$, and this map is surjective. Since we allow an edge to be a loop in $X$, the map $\pi$ is not necessarily a covering.  

Associated to a coloring $c$ of $X$ there is an action of $W$ on the vertex set $VX$. It is defined by declaring that $a_i$ flips all pairs of distinct vertices joined by an edge with color $i$, and $a_i$ fixes all vertices for which the adjacent edge with color $i$ is a loop. By the universal property of free products, this indeed defines an action of $W$ on $VX$. The graph $X$ with the given coloring is then the Schreier graph associated to the $W$-action on $VX$. The stabilizer of the base vertex $x_0$ in $W$ will be denoted $W_{x_0,c}$. Note that conversely, for every action of $W$ on a finite set, the associated Schreier graph is a finite connected $d$-regular graph equipped with a coloring. 

\begin{Lemma} \label{lem-description-fibers}
Let $X$ be a finite connected $d$-regular graph, and $x_0$ a base vertex of $X$. Let $c$ be a coloring of $X$, and $\pi: T_d \to (X,c)$ the unique color preserving map such that $\pi(v_0) = x_0$. Let $x$ be a vertex of $X$. If $(j_1, \ldots, j_r)$ is the labelling of a path in $(X,c)$ going from $x_0$ to $x$, then the fiber of $x$ under $\pi$ is \[ \pi^{-1}(x) = W_{x_0,c} \, a_{j_1} \cdots a_{j_r}, \] where $W_{x_0,c}$ is the stabilizer of $x_0$ in $W$ for the action of $W$ on $VX$ defined by $c$.
\end{Lemma}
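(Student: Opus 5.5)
The plan is to write down the map $\pi$ explicitly in terms of the $W$-action on $VX$ defined by $c$, and then read off its fibers. For $\gamma \in W$, viewed as a vertex of $T_d$, I claim that
\[ \pi(\gamma) = \gamma^{-1} \cdot x_0 , \]
where $\cdot$ denotes the left action of $W$ on $VX$ associated to $c$.

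\textbf{Step 1: the formula for $\pi$.} Define $\pi'(\gamma) = \gamma^{-1}\cdot x_0$. Then $\pi'(v_0) = x_0$, since $v_0$ is the identity element. Now take an edge of $T_d$ of color $i$; it joins $\gamma$ and $\gamma a_i$. Since $a_i^{-1} = a_i$, its endpoints are sent to
\[ y = \gamma^{-1}\cdot x_0 \quad \text{and} \quad a_i \cdot y . \]
By definition of the action, $a_i\cdot y$ is the other endpoint of the color-$i$ edge at $y$, or $y$ itself when that edge is a loop. So $\pi'$ sends each color-$i$ edge onto the color-$i$ edge at $y$, i.e.\ $\pi'$ is color preserving. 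By the uniqueness of the color preserving map with $v_0 \mapsto x_0$, we get $\pi = \pi'$.

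\textbf{Step 2: expressing $x$ through the path.} Let $(j_1,\dots,j_r)$ label a path $x_0 = y_0, y_1, \dots, y_r = x$, with $y_k$ joined to $y_{k-1}$ by the edge of color $j_k$; loops are allowed. By the definition of the action, $y_k = a_{j_k}\cdot y_{k-1}$, including in the loop case, where both sides equal $y_{k-1}$. Inductively,
\[ x = a_{j_r}\cdots a_{j_1}\cdot x_0 = g^{-1}\cdot x_0, \qquad g := a_{j_1}\cdots a_{j_r}. \]

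\textbf{Step 3: computing the fiber.} For $\gamma \in W$ we have
\[ \pi(\gamma) = x \iff \gamma^{-1}\cdot x_0 = g^{-1}\cdot x_0 \iff g\gamma^{-1} \in W_{x_0,c} \iff \gamma \in W_{x_0,c}\, g . \]
This is exactly the claimed description $\pi^{-1}(x) = W_{x_0,c}\, a_{j_1}\cdots a_{j_r}$.

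The only delicate point is the left/right bookkeeping. Vertices of $T_d$ are elements of $W$, with edges given by right multiplication, while the action on $VX$ is a left action. This is what forces the inverse in $\pi(\gamma)=\gamma^{-1}\cdot x_0$ and the coset to be a right coset $W_{x_0,c}\,g$. Beyond this, one only has to check that the loop case is consistent with "color preserving", which is immediate because $a_i$ fixes vertices carrying a color-$i$ loop.
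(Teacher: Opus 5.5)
Your proof is correct and follows essentially the same route as the paper. The paper reads off $\pi(\gamma)$ as the endpoint of the path from $x_0$ labelled by the normal form of $\gamma$, which is your $\gamma^{-1}\cdot x_0$ left implicit. It then appends the reversed path $(j_r,\ldots,j_1)$ to obtain the condition $a_{j_1}\cdots a_{j_r}\gamma^{-1}\in W_{x_0,c}$, exactly as in your Step 3; your explicit formula checked via uniqueness is just a slightly cleaner packaging of the same computation.
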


\begin{proof}
	Take $\gamma \in W$ with normal form $\gamma = a_{i_1} \cdots a_{i_k}$. Then $\gamma \in \pi^{-1}(x)$ if and only if the path from $x_0$ in $(X,c)$ labelled $(i_1, \cdots, i_k)$ terminates at $x$. Since $(j_1, \ldots, j_r)$ is the labelling of a path in $(X,c)$ going from $x_0$ to $x$, this conditions is also equivalent to the fact that the path from $x_0$ in $(X,c)$ labelled $(i_1, \cdots, i_k, j_r, \ldots, j_1)$ is a loop. This last condition means that $w := a_{j_1} \cdots a_{j_r} a_{i_k} \cdots a_{i_1}$ belongs to $W_{x_0,c}$. Since $w= a_{j_1} \cdots a_{j_r} \gamma^{-1}$, the condition $w  \in W_{x_0,c}$ is equivalent to $\gamma \in  W_{x_0,c} \,  a_{j_1} \cdots a_{j_r}$. 
\end{proof}

Let now $(c_1,c_2)$ be two colorings of $X$. The pair $(c_1,c_2)$ gives rise to a collection of permutations $(\sigma_x)_{x \in VX}$ of $\left\lbrace 1, \ldots,d \right\rbrace $, defined by the property that for every $x \in VX$ and every edge $e$ adjacent to $x$, the coloring $c_2$ is given by $c_2(e) = \sigma_x(c_1(e))$.

The following proposition is the analogue of Proposition 2.4 in \cite{LMZ-superrigidity} in our present setting. 

\begin{Proposition} \label{prop-commens-tree-recoloring}
	Let $X$ be a finite connected $d$-regular graph, and $x_0$ a base vertex of $X$. Let $c_1$ be a coloring of $X$,  and $\pi: T_d \to (X,c_1)$ the unique color preserving map such that $\pi(v_0) = x_0$. Let $c_2$ be another coloring of $X$, and let $(\sigma_x)_{x \in VX}$ be the collection of permutations of $\left\lbrace 1, \ldots,d \right\rbrace $ associated to the pair $(c_1,c_2)$. Then the following hold: \begin{enumerate}
		\item \label{item-construction-elements}  The element $g$ of $\Aut(T_d)$ defined by the conditions $g(v_0) = v_0$ and $\sigma(g,v) = \sigma_{\pi(v)}$ for every vertex $v$ of $T_d$, belongs to $C_d$. 
		\item \label{item-path-conjugate} For every loop in $X$ based at $x_0$ having labelling $(i_1, \ldots, i_k)$ for $c_1$ and $(j_1, \ldots, j_k)$ for $c_2$, one has $g a_{i_1} \cdots a_{i_k} g^{-1} = a_{j_1} \cdots a_{j_k}$. 
		\item \label{item-stab-conjugate}  If $W_{x_0,c_i}$ is the stabilizer of $x_0$ in $W$ for the action of $W$ on $VX$ associated to $c_i$, then $gW_{x_0,c_1}g^{-1} = W_{x_0,c_2}$. 
	\end{enumerate}
\end{Proposition}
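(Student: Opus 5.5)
We plan to prove the three items together, by first establishing (\ref{item-path-conjugate}) for the element $g$, and then deducing (\ref{item-construction-elements}) and (\ref{item-stab-conjugate}) from it.

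Let $g$ be as in (\ref{item-construction-elements}). The data $g(v_0)=v_0$ and the permutations $\sigma_{\pi(v)}$ determine a unique automorphism, because each $\sigma_{\pi(v)}$ is a permutation of $\{1,\ldots,d\}$. The key observation is how $g$ acts on paths. Take a path in $T_d$ from $v_0$ with labelling $(i_1,\ldots,i_k)$. Its image under $\pi$ is the path in $(X,c_1)$ from $x_0$ with that $c_1$-labelling. By induction on $k$, using $g(va_i)=g(v)a_{\sigma_{\pi(v)}(i)}$ and the definition $c_2(e)=\sigma_x(c_1(e))$, one shows that $g$ sends this path to the path from $v_0$ whose labelling is the $c_2$-labelling of the same path in $X$. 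Loops in $X$ need a check here. If the edge $e$ at $x$ of $c_1$-colour $i$ is a loop, then $\sigma_x(i)=c_2(e)$ is still the colour of that loop for $c_2$, so the statement about labels holds verbatim. Hence
\[ g(a_{i_1}\cdots a_{i_k}) = a_{j_1}\cdots a_{j_k}, \]
where $(j_1,\ldots,j_k)$ is the $c_2$-labelling of the projected path. (Vertices of $T_d$ are identified with elements of $W$.)

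For (\ref{item-path-conjugate}), take a loop at $x_0$ and set $w=a_{i_1}\cdots a_{i_k}$ and $w'=a_{j_1}\cdots a_{j_k}$. We compare $gw$ and $w'g$ by their values at $v_0$ and their local permutations. At $v_0$ both give $w'$. For local permutations we use $\sigma(gw,v)=\sigma(g,wv)$, since $w\in W$ has trivial local permutations, and $\sigma(w'g,v)=\sigma(g,v)$. So it suffices that $\pi(wv)=\pi(v)$ for all $v$. The loop condition says $w\in W_{x_0,c_1}$, and by Lemma \ref{lem-description-fibers} $\pi(v)$ depends only on the coset $W_{x_0,c_1}v$. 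Hence $\pi(wv)=\pi(v)$ holds, and therefore $gwg^{-1}=w'$.

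For (\ref{item-stab-conjugate}), recall that $W_{x_0,c_1}$ is the set of reduced words labelling loops at $x_0$ for $c_1$. By (\ref{item-path-conjugate}), $g$ maps each such element to the element given by the $c_2$-labelling, which is a loop at $x_0$ for $c_2$; this gives $gW_{x_0,c_1}g^{-1}\subseteq W_{x_0,c_2}$. The reverse inclusion follows by symmetry: exchange the roles of $c_1$ and $c_2$. The corresponding automorphism uses the permutations $\sigma_x^{-1}$, and we must check that it equals $g^{-1}$. One subtlety is that the colour-preserving map $T_d\to(X,c_2)$ differs from $\pi$; it equals $\pi\circ g^{-1}$ by the path description above. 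With that, the two automorphisms agree at $v_0$ and on local permutations, hence coincide.

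For (\ref{item-construction-elements}), $W_{x_0,c_1}$ has finite index in $W$, since $X$ is finite. By (\ref{item-stab-conjugate}), $gW_{x_0,c_1}g^{-1}=W_{x_0,c_2}$ has finite index in $W$, and likewise $g^{-1}W_{x_0,c_2}g=W_{x_0,c_1}$. Hence $W\cap gWg^{-1}$ has finite index in both $W$ and $gWg^{-1}$, so $g\in C_d$.

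The main obstacle is the careful induction for the path description of $g$ in the presence of loops and multiple edges. This is exactly where the present setting differs from \cite{LMZ-superrigidity}: the map $\pi$ need not be a covering, so the argument cannot use covering-space lifting and must rely only on the colour/label bookkeeping.
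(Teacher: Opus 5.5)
Your proof is correct, but it runs in the opposite order from the paper's and replaces two of its ingredients.

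\textbf{The paper's route.} It proves (1) first. The permutations $\sigma(g,\cdot)$ are constant on fibres of $\pi$, which are right cosets of $W_{x_0,c_1}$ by Lemma \ref{lem-description-fibers}. Lemma \ref{lem-description-periodic} then gives $W_{x_0,c_1}\le W\cap g^{-1}Wg$. A lattice argument concludes: this finite index subgroup is a cocompact lattice, hence so is its $g$-conjugate $gWg^{-1}\cap W$, which must therefore have finite index in $W$. Item (2) is then read off from the path from $v_0$ to $g(\gamma)$, using $g\gamma g^{-1}\in W$.

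\textbf{Your route.} You get (2) directly by checking that $gw$ and $w'g$ agree at $v_0$ and have the same local permutations. This is exactly the ``if'' direction of Lemma \ref{lem-description-periodic} in this instance, so your argument is self-contained. You then deduce (1) from (3), using that both $W_{x_0,c_1}$ and $W_{x_0,c_2}$ have finite index. This avoids the lattice argument altogether, at the cost of needing (3) before (1).

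\textbf{An unnecessary detour in (3).} For the reverse inclusion you build the symmetric automorphism from the $\sigma_x^{-1}$ and check that the colour-preserving map $T_d\to(X,c_2)$ equals $\pi\circ g^{-1}$. Both verifications are correct, but neither is needed. Item (2) holds for every loop at $x_0$, and every loop carries both a $c_1$- and a $c_2$-labelling. So an element of $W_{x_0,c_2}$ with normal form $a_{j_1}\cdots a_{j_k}$ traces a loop at $x_0$ whose $c_1$-labelling gives an element of $W_{x_0,c_1}$, and $g$ conjugates that element onto it. This gives both inclusions at once, which is why the paper says (3) follows from (2).

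\textbf{A minor point on the existence of $g$.} It is not guaranteed merely because each $\sigma_{\pi(v)}$ is a permutation. One also needs the edge compatibility $\sigma_{\pi(v)}(i)=\sigma_{\pi(va_i)}(i)$. This holds here because both sides equal the $c_2$-colour of the edge of $X$ with $c_1$-colour $i$ at $\pi(v)$. The paper takes existence as part of the statement, so this is only a matter of giving the right justification.
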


\begin{proof}
(\ref{item-construction-elements}). By definition of the element $g$, the permutations $\sigma(g,v)$ are constant along fibers of $\pi$. According to Lemma \ref{lem-description-fibers}, each fiber of $\pi$ is a right coset of $W_{x_0,c_1}$. Hence for the left action of $W$ on itself, the subgroup $W_{x_0,c_1}$ preserves each fiber of $\pi$. We deduce that for every $\gamma \in W_{x_0,c_1}$, one has $\sigma(g,v) = \sigma(g, \gamma v)$ for every vertex $v$. According to Lemma \ref{lem-description-periodic} this implies $W_{x_0,c_1} \leq W \cap g^{-1} W g$. Since the graph $X$ is finite, $W_{x_0,c_1}$ is a finite index subgroup of $W$. We deduce $W \cap g^{-1} W g$ has finite index in $W$. Therefore $W \cap g^{-1} W g$ is a cocompact lattice in  $\Aut(T_d)$. Conjugation by $g$ sends $W \cap g^{-1} W g$ to $gW g^{-1} \cap W$, so $gW g^{-1} \cap W$ is also a cocompact lattice. This forces $gW g^{-1} \cap W$ to be of finite index in $W$ as well. Therefore $g \in C_d$. 

(\ref{item-path-conjugate}). Set $\gamma = a_{i_1} \cdots a_{i_k}$. Since $(i_1, \ldots, i_k)$ is the labelling for $c_1$ of a loop at $x_0$, so is $(i_k, \ldots, i_1)$. Hence $\gamma \in W_{x_0,c_1}$. At the level of the tree, the element $\gamma$ corresponds to the path from $v_0$ labelled  $(i_1, \ldots, i_k)$. According to the previous paragraph one has $g W_{x_0,c_1} g^{-1} \leq W$. Therefore $g \gamma g^{-1} \in W$. Since $g$ fixes $v_0$, the element $g \gamma g^{-1}$ sends $v_0$ to the vertex $g(\gamma)$. Hence the normal form of $g \gamma g^{-1}$ is obtained by following the path in $T_d$ going from $v_0$ to the vertex $g(\gamma)$. By construction of the element $g$, this path is labelled $(j_1, \ldots, j_k)$. Hence $g \gamma g^{-1} = a_{j_1} \cdots a_{j_k}$. 

(\ref{item-stab-conjugate}) follows from (\ref{item-path-conjugate}). 
\end{proof}

\subsection{Proof of Theorem \ref{thm-intro-comme-tree-abelian-quotient}} \label{subsec-proof-comm-tree}

\begin{proof}[Proof of Theorem \ref{thm-intro-comme-tree-abelian-quotient}]
	As explained above, it is enough to prove the theorem for the group $C_d$. The only element of $\Aut(T_d)$ centralizing a finite index subgroup of $W$ is the identity, so the natural homomorphism $C_d \to \Comm(W)$ from $C_d$ to the abstract commensurator of $W$ is injective \cite[B.7]{Bass-Kulk-90}. In the sequel we identify $C_d$ with its image in $\Comm(W)$. We consider the homomorphism $d^W : \Comm(W) \to \Z^{(\simple)}$ from Theorem \ref{thm-definition-morphism}. We will show that the image of the restriction of $d^W$ to $C_d$  has infinite rank. This will prove the statement. 	We fix an integer $r \geq 5$. We denote by $A(r)$ the alternating group $A(r) = \mathrm{Alt}(r)$. Using notation from Definition \ref{defi-map-dS}, we will show that $d_{A(r)}^W: C_d \to \Z$ is surjective. This will imply the image of  $d^W : C_d \to \Z^{(\simple)}$ is infinitely generated, and hence is free abelian of infinite rank. 
	
	Let $X$ be the $d$-regular graph with $r$ vertices $x_0, \ldots, x_{r-1}$, with an edge $e_i$ between $x_i$ and $x_{i+1}$ for every $i=0, \dots, r-2$, and such that all other edges are loops. We consider a coloring $c_1$ of $X$ such that edges $e_0, e_2, \ldots, e_{r-2}$ have color $1$, and edges  $e_1, e_3, \ldots, e_{r-3}$ have color $2$. How $c_1$ is defined on other edges will not matter. The permutation group on $VX$ induced by the $W$-action associated to $c_1$ is the dihedral group of order $2r$. 
	
	We now define a coloring $c_2$ of $X$ by starting from $c_1$, and specifying for each vertex $x$ of $X$ a permutation of $\left\lbrace 1, \ldots,d \right\rbrace $. For vertices $x_3, \ldots, x_{r-1}$, we take the identity. In other words $c_2$ coincide with $c_1$ on all edges adjacent to those vertices. For $x_0, x_1, x_{2}$, we take respectively $(12), (123), (23)$. This indeed defines  a coloring of $X$ in a coherent way as the edge joining $x_2$ and $x_3$ has color $1$ for $c_1$.  Exactly five edges have changed color between $c_1$ and $c_2$, among which $e_0,e_1$, which were previously colored $1$ and $2$ and are now colored $2$ and $3$. The element $a_3$ acts on $VX$ as the transposition $(x_1,x_2)$. The supports of $a_1$ and $a_3$ intersect along $\left\lbrace x_2 \right\rbrace $, and the supports of  $a_2$ and $a_3$ intersect along $\left\lbrace x_1 \right\rbrace $. One then  easily verifies that the permutation group on $VX$ induced by the $W$-action associated to $c_2$ is the entire symmetric group $\mathrm{Sym}(r)$. 
		
	We denote by  $A =  W_{x_0,c_1}$ the stabilizer of $x_0$ in $W$ associated to the $W$-action on $VX$ defined by $c_1$, and by  $B =  W_{x_0,c_2}$ the stabilizer of $x_0$ in $W$ associated to the $W$-action on $VX$ defined by $c_2$. Proposition \ref{prop-commens-tree-recoloring} applied to $X,x_0,c_1,c_2$  provides $g \in C_d$ such that $g A g^{-1} = B$. We claim that  $d_{A(r)}^W(g)$ is equal to $1$. For, we need to find a $W$-subnormal representative of $g$. Let $A_1 = \mathrm{Core}_W(A)$ and $B_1 = \mathrm{Core}_W(B)$ denote the normal core of $A$ and $B$ respectively in $W$. By the above discussion we have $(A:A_1) = 2$ and $(B:B_1) = (r-1) !$. Let $A_2 = g^{-1} B_1 g \cap A_1$ and $B_2 = g A_2 g^{-1} = B_1 \cap g A_1 g^{-1}$. Since $B_1 \lhd B$ we have $g^{-1} B_1 g \lhd A$, and hence $A_2 \lhd A_1$. Therefore $A_2 \lhd A_1 \lhd W$. Similarly since $A_1 \lhd A$ we have $g A_1 g^{-1} \lhd B$, and hence $B_2 \lhd B_1$. Therefore $B_2 \lhd B_1 \lhd W$. So all together the restriction of the conjugation by $g$ to $A_2$ defines an isomorphism $A_2 \to B_2$ that is a $W$-subnormal representative of $g$. We compute $(n_S^W(B_2))_\simple$. The group $W/B_1$ is the symmetric group on $r$ elements and we assume $r \geq 5$, so $n_S^W(B_1) = 1$ if $S \in \left\lbrace C_2,  A(r) \right\rbrace$, and  $n_S^W(B_1) = 0$ otherwise. Since $(A:A_1) = 2$ we have $(B:g A_1 g^{-1}) = 2$, and hence $(B_1:B_2) \leq 2$. So $B_1/B_2$ is either trivial or cyclic of order two. By Lemma \ref{lem-chasles} applied to $B_2 \lhd B_1 \lhd W$ we infer  $n_S^W(B_2) = 1$ if $S= A(r)$,  $n_S^W(B_2) \in \left\lbrace 1,2\right\rbrace $ if $S= C_2$, and  $n_S^W(B_2) = 0$ otherwise. In order to complete the proof one needs to see $n_S^W(A_2) = 0$ if $S= A(r)$. Since $W/A_1$ is a solvable dihedral group, $n_S^W(A_1) = 0$ for $S$ non-abelian. Now one has $(A: A_2) = (A: A_1)  (A_1: A_2)  = 2 (A_1: A_2)$. On the other hand $(A: A_2) = (B:B_2)$, which is equal to either $(r-1) !$ or $2 (r-1) !$ (depending on whether $B_1/B_2$ is trivial or of order two). Combined together we deduce $(A_1: A_2) \leq (r-1) !$. In particular Lemma \ref{lem-formula-index} prevents $A(r)$ from appearing in a composition series going from $A_2$ to $A_1$. In other words $n_S^{A_1}(A_2) = 0$ if $S= A(r)$. By Lemma \ref{lem-chasles} applied to $A_2 \lhd A_1 \lhd W$ we infer  $n_S^W(A_2) = 0$ if $S= A(r)$, as desired. This completes the proof.
\end{proof}

\subsection{Further questions} 

Recall from the introduction that the group $C_d$ is known to be monolithic, and its monolith $M_d := \mathrm{Mon}(C_d)$ is a simple group \cite{Cap-appendix-comm-tree}. The infinite index normal subgroup $N_d$ of $C_d$ that we exhibit in the proof of Theorem \ref{thm-intro-comme-tree-abelian-quotient} is the kernel of the  homomorphism $C_d \to \Z^{(\simple)}$ obtained as the composition of the  embedding of $C_d$ in the abstract commensurator $\Comm(W)$ and the homomorphism $d^W : \Comm(W) \to \Z^{(\simple)}$. By definition of the monolith we have $M_d \leq N_d$. While for the abstract commensurator $\Comm(W)$ the monolith is equal to the kernel of $d^W : \Comm(W) \to \Z^{(\simple)}$ by Corollary \ref{cor-ker-CommF-monolith} (since $W$ is virtually a free group and by Lemma \ref{prop-compatibility-hom-finite-index}), we do not know whether the same is true for $C_d$. In other words, we do not know if $M_d = N_d$. More generally it would be interesting to have a description of $C_d / M_d$ and of $M_d$. In that direction, we ask the following questions: 

\begin{Question}
	Is the quotient $C_d / M_d$ is virtually abelian ? 
\end{Question}

Denote by $C_d^+$  the index two subgroup of $C_d$ that acts on $T_d$ in a type preserving way. Recall that $M_d$ contains $W \cap C_d^+$ \cite[Proposition 5.1]{LMZ-superrigidity}. A positive answer to the following question could be seen as an analogue of Theorem 5.4 in \cite{BELBRVW-comm-free} for the group $C_d$. 

\begin{Question}
Is the monolith $M_d$ equal to the subgroup of $C_d^+$ generated by subgroups of $C_d^+$ that are commensurable with $W \cap C_d^+$ ? 
\end{Question}

The subgroup described above is normal in $C_d$, and hence contains $M_d$. The question asks if it is contained in $M_d$. 

\subsection{Consequences of Theorem \ref{thm-intro-comme-tree-abelian-quotient}} 

Let $\Gamma$ be a cocompact lattice in $G = \Aut(T_d)$, and $C = \Comm_G(\Gamma)$ its commensurator. 
Let $ C / \! \! / \Gamma$ denote the completion of $C$ with respect to $\Gamma$, and  $\tau_{C,\Gamma}: C  \to C / \! \! / \Gamma$ the associated homomorphism. We refer to Section 3 in \cite{Shalom-Willis} for basic properties. The group  $C / \! \! / \Gamma$ is a totally disconnected locally compact group. The image of $C$ in $ C / \! \! / \Gamma$ is dense, and the subgroup $U :=  \overline{ \tau_{C,\Gamma}(\Gamma)}$ is a compact open subgroup of $ C / \! \! / \Gamma$ with the property that $\tau_{C,\Gamma}^{-1}(U) = \Gamma$.

\begin{Corollary} \label{cor-completion-abelian-quotient}
	Let $\Gamma$ be a cocompact lattice in $G = \Aut(T_d)$, and $C = \Comm_G(\Gamma)$. Then the group $ C / \! \! / \Gamma$  admits an open normal subgroup such that the associated quotient is a free abelian group of infinite rank. In particular $ C / \! \! / \Gamma $  is not virtually topologically simple.
\end{Corollary}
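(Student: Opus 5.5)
The plan is to extend the homomorphism $C \to \Z^{(\simple)}$ from the proof of Theorem \ref{thm-intro-comme-tree-abelian-quotient} continuously to $C /\!\!/ \Gamma$. That proof gives a homomorphism $D := d^\Gamma|_C : C \to \Z^{(\simple)}$ whose image $I$ is free abelian of infinite rank. Here $C$ is viewed inside $\Comm(\Gamma)$ via the injective map $C \to \Comm(\Gamma)$. The image $I$ is free abelian because it is a subgroup of $\Z^{(\simple)}$, and it has infinite rank by the surjectivity of the maps $d^W_{A(r)}$, transported to $\Gamma$ by conjugation and Proposition \ref{prop-compatibility-hom-finite-index}. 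The key observation is that $\Gamma \leq \ker D$. Indeed, each $\gamma \in \Gamma$ acts on $\Gamma$ by an inner automorphism, so its image in $\Comm(\Gamma)$ lies in $a_\Gamma(\Aut(\Gamma)) \leq \AComm(\Gamma)$, and $\AComm(\Gamma) \leq \ker d^\Gamma$ by Proposition \ref{prop-AComm-contained-kernel}.

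Next I would extend $D$ to the completion. Since $\Gamma$ is contained in $\ker D$, the kernel $N := \ker D$ is a union of left cosets of $\Gamma$. Hence $D$ is continuous when $C$ carries the topology pulled back from $C /\!\!/ \Gamma$, in which $\Gamma = \tau_{C,\Gamma}^{-1}(U)$ is open and the target is discrete. Concretely, I would define $\widehat{D} : C /\!\!/ \Gamma \to \Z^{(\simple)}$ using the density of $\tau_{C,\Gamma}(C)$ and the openness of $U$. Every element $x$ of the completion can be written as $x = \tau(c) u$ with $c \in C$ and $u \in U$. Set $\widehat{D}(x) = D(c)$. This is well defined: if $\tau(c)u = \tau(c')u'$, then $\tau(c^{-1}c') \in U$, so $c^{-1}c' \in \tau^{-1}(U) = \Gamma \leq \ker D$.

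Then I would check that $\widehat{D}$ is a homomorphism. This amounts to checking that $\widehat{D}$ vanishes on $U$ and is invariant under conjugation by $U$. The cleanest route is to show that $\widehat{D}$ is locally constant and agrees with $D \circ \tau^{-1}$ on the dense subgroup $\tau(C)$. Continuity into a discrete group, together with density, then gives the homomorphism property, since $\{(x,y) : \widehat{D}(xy) = \widehat{D}(x) + \widehat{D}(y)\}$ is closed and contains the dense set $\tau(C) \times \tau(C)$. Local constancy holds because $\widehat{D}$ is constant on each coset $xU$ by construction.

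Finally, the kernel $\ker \widehat{D}$ is open, since it contains $U$, and it is normal. The quotient $(C /\!\!/ \Gamma) / \ker \widehat{D}$ is isomorphic to $\widehat{D}(C /\!\!/ \Gamma) = D(C) = I$, which is free abelian of infinite rank. For the last assertion, suppose $C /\!\!/ \Gamma$ had a finite-index open subgroup $H$ that is topologically simple. Its image in $I$ would have finite index, hence be nontrivial and abelian. The kernel of $\widehat{D}$ restricted to $H$ would then be a nontrivial proper open normal subgroup of $H$, contradicting topological simplicity. It is nontrivial because it contains the infinite group $U \cap H$, and proper because $I$ is infinite. The main obstacle I expect is the homomorphism and continuity bookkeeping for $\widehat{D}$. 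The containment $\Gamma \leq \ker D$ is what makes it work.
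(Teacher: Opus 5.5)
Your proposal is correct and follows the same route as the paper: set $N=\ker(d^\Gamma|_C)$, show $\Gamma\le N$, and extend to a continuous homomorphism on $C/\!\!/\Gamma$ whose kernel is open with quotient $d^\Gamma(C)$. The differences are minor: you obtain $\Gamma\le N$ directly, since inner automorphisms lie in $a_\Gamma(\Aut(\Gamma))\le\AComm(\Gamma)\le\ker d^\Gamma$, whereas the paper puts a finite-index subgroup of $\Gamma$ in $N$ and uses torsion-freeness of $C/N$; and you build the extension by hand via density, whereas the paper cites Shalom--Willis, Lemma~3.8.
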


\begin{proof}
Let $N$ be the normal subgroup of $C$ afforded by Theorem \ref{thm-intro-comme-tree-abelian-quotient}. So $C/N$ is free abelian of infinite rank. Moreover there is a finite index subgroup of $\Gamma$ that is contained in $N$. This follows by the construction of $N$, but this is also consequence of \cite[Proposition 5.1]{LMZ-superrigidity}, or \cite[Theorem 1.1]{Creutz-Shalom-NST}, or \cite[Theorem A.1]{Cap-appendix-comm-tree}. Since $C/N$ is torsion free, actually $\Gamma \leq N$. It follows that the $C$-equivariant map $C / \Gamma \to C/N$ induces a continuous surjective homomorphism $C / \! \! / \Gamma \to C/N$ with kernel  $\overline{ \tau_{C,\Gamma}(N)}$ \cite[Lemma 3.8]{Shalom-Willis}. This shows the statement. 
\end{proof}

\bibliographystyle{amsalpha}
\bibliography{general-bibliography}
\end{document}